\theoremstyle{plain}
\newtheorem{thm}{Theorem}
\newtheorem{lem}{Lemma}[section]
\newtheorem{prop}[lem]{Proposition}
\theoremstyle{definition}
\newtheorem{rem}[lem]{Remark}
\newtheorem{ex}[lem]{Example}
\newcommand{\R}{\mathbb{R}}
\newcommand{\Z}{\mathbb{Z}}
\newcommand{\N}{\mathbb{N}}
\newcommand{\Q}{\mathbb{Q}}
\newcommand{\Rc}{\mathcal{R}}
\newcommand{\Sc}{\mathcal{S}}
\begin{document}

\title[$q$-deformed real numbers]{On $q$-deformed real numbers}

\author{Sophie Morier-Genoud, Valentin Ovsienko}

\address{Sophie Morier-Genoud,
Sorbonne Universit\'e, Universit\'e Paris Diderot, CNRS,
Institut de Math\'ematiques de Jussieu-Paris Rive Gauche,
 F-75005, Paris, France
}

\address{
Valentin Ovsienko,
Centre national de la recherche scientifique,
Laboratoire de Math\'ematiques 
U.F.R. Sciences Exactes et Naturelles 
Moulin de la Housse - BP 1039 
51687 Reims cedex 2,
France}
\email{sophie.morier-genoud@imj-prg.fr, valentin.ovsienko@univ-reims.fr}

\keywords{$q$-rationals, $q$-continued fractions, total positivity, Farey tessellation.}


\begin{abstract}
We associate a formal power series with integer coefficients to a positive real number,
we interpret this series as a ``$q$-analogue of a real.''
The construction is based on the notion of $q$-deformed rational number introduced in arXiv:1812.00170.
Extending the construction to negative real numbers, we obtain certain Laurent series.
\end{abstract}

\maketitle

\thispagestyle{empty}


\section{Introduction}\label{IntSec}

We take a new and experimental route to introduce a certain version of ``$q$-deformed real numbers'',
extending $q$-deformations of rationals introduced in~\cite{MGOqR}.
Given a real number~$x\geq0$, we will construct a formal power series with integer coefficients associated with~$x$.
There is no explicit formula to determine the coefficients of this series,
but there is an algorithm to calculate them.

Our construction is completely different from the classically known $q$-deformations of a number~$x\in\R$,
defined by the formulas 
$\frac{q^x-1}{q-1}$ (or $\frac{q^x-q^{-x}}{q-q^{-1}}$) that do not give power series with integer coefficients.
The only case where our construction coincides with the classical one is that of integers.
Throughout this paper, we always use the Gauss definition:
$$
\left[a\right]_{q}=\frac{q^a-1}{q-1},
\qquad\qquad
a\in\Z,
$$
that gives the familiar polynomials
$\left[a\right]_{q}=1+q+\cdots+q^{a-1}$
and $\left[-a\right]_{q}=-q^{-1}-q^{-2}-\cdots-q^{-a}$, for~$a\in\N$.
All our constructions will be in accordance with these formulas.

It is of course too early to discuss possible applications of the $q$-deformed real numbers,
since we do not know sufficiently general properties of the power series we obtain.
However, the existence of the procedure is quite surprising and the examples are captivating.

The $q$-deformation of a rational number~$\frac{r}{s}$
is a quotient of two polynomials:
$$
\left[\frac{r}{s}\right]_{q}=\frac{\Rc(q)}{\Sc(q)},
$$
where~$\Rc$ and~$\Sc$ both depend on~$r$ and~$s$ (see~\cite{MGOqR}).
In this paper, we represent these rational functions as Taylor series at $q=0$.

The definition of $q$-reals is as follows.
Let first $x\geq1$ be an irrational number, and~$(x_n)_{n\geq1}$ any sequence of rational numbers that converges to~$x$.
We $q$-deform the sequence~$(x_n)_{n\geq1}$ to 
obtain a sequence of rational functions:
$\left[x_1\right]_q,\left[x_2\right]_q,\left[x_3\right]_q,\ldots$
For every $n\geq1$, we
consider the Taylor expansion of the rational function~$\left[x_n\right]_q$ at~$q=0$:
\begin{equation}
\label{TSEq}
\left[x_n\right]_q=:\sum_{k\geq0}\varkappa_{n,k}\,q^k.
\end{equation}
Abusing the notation, we use the same name, $\left[x_n\right]_q$, for the Taylor series.
The $q$-deformation of~$x$ is the series
\begin{equation}
\label{TSEqBis}
\left[x\right]_q:=\sum_{k\geq0}\varkappa_k\,q^k,
\qquad\hbox{where}\qquad 
\varkappa_k=\lim_{n\to\infty}\varkappa_{n,k}.
\end{equation}
The existence of the limit and its independence of the choice of the converging sequence~$(x_n)_{n\geq1}$
is guaranteed by the following theorem.

\begin{thm}
\label{ConvThm}
Given an irrational real number $x\geq1$, for every~$k\geq0$ the coefficients~$\varkappa_{n,k}$ of the Taylor series~\eqref{TSEq} stabilize as~$n$ grows.
Moreover, the limit coefficients~$\varkappa_k$ in~\eqref{TSEqBis} are integers that do not depend on the choice of the converging sequence of rationals $(x_n)_{n\geq1}$.
\end{thm}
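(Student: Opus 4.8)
The plan is to reduce everything to the continued fraction expansion of $x$ and to exploit the multiplicative (matrix) structure of $q$-rationals established in \cite{MGOqR}. Since $x\geq 1$ is irrational, it has a unique infinite regular continued fraction $x=[a_1,a_2,a_3,\dots]$ with $a_i\in\N$ and $a_1\geq 1$, whose convergents $c_n=[a_1,\dots,a_n]$ form a distinguished sequence of rationals tending to $x$. I would first prove the theorem for this canonical sequence, writing $[c_n]_q=\Rc_n(q)/\Sc_n(q)$ with $\Rc_n,\Sc_n\in\Z[q]$. A key normalization from \cite{MGOqR}, which I would recall, is that $\Sc_n(0)=1$ when $c_n\geq 1$; consequently $1/\Sc_n(q)$, and hence the Taylor series $[c_n]_q=\sum_k\varkappa_{n,k}q^k$, has integer coefficients. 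This already settles integrality of each $\varkappa_{n,k}$, and it will yield integrality of the limit once stabilization is proved.

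The heart of the argument is the stabilization of the coefficients along the convergents, which I would extract from a $q$-analogue of the classical identity $p_{n+1}q_n-p_nq_{n+1}=\pm 1$. Using the recurrence for the numerators and denominators of $q$-continued fractions, the ``$q$-determinant'' $\Rc_{n+1}\Sc_n-\Rc_n\Sc_{n+1}$ equals $\pm q^{e_n}$ for an explicit, strictly increasing exponent $e_n$ (essentially accumulating the partial quotients $a_1,\dots,a_n$). Dividing by $\Sc_{n+1}\Sc_n$, whose constant term is $1$, yields
\[
[c_{n+1}]_q-[c_n]_q=\pm\,q^{e_n}\bigl(1+O(q)\bigr),
\]
a power series of valuation exactly $e_n$. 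Telescoping $[c_n]_q=[c_1]_q+\sum_{j=1}^{n-1}\bigl([c_{j+1}]_q-[c_j]_q\bigr)$ then shows that the coefficient of $q^k$ is affected only by the finitely many increments with $e_j\leq k$. Hence for each fixed $k$ the sequence $\varkappa_{n,k}$ is eventually constant, and its limit $\varkappa_k$ is the integer produced by the first few convergents.

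It remains to remove the dependence on the choice of sequence. For an arbitrary sequence $(x_n)\to x$, I would use that $x$ is irrational: as $x_n\to x$, the length of the common initial segment of the continued fractions of $x_n$ and of $x$ tends to infinity. Writing $x_n=[a_1,\dots,a_{m(n)},\dots]$ with $m(n)\to\infty$, the same $q$-determinant estimate gives $\val_q\bigl([x_n]_q-[c_{m(n)}]_q\bigr)\to\infty$, so $[x_n]_q$ and the canonical convergents $[c_{m(n)}]_q$ have identical Taylor coefficients in every fixed degree once $n$ is large. Therefore $\lim_n\varkappa_{n,k}$ coincides with the value computed from the convergents and is independent of the sequence.

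The main obstacle is the valuation estimate $e_n\to\infty$, together with its analogue for matching prefixes: one must control, purely in terms of the partial quotients, the $q$-order to which $[c_n]_q$ pins down the low-degree coefficients, and verify that this order grows. This is where the explicit matrix recurrences and the total-positivity features of the $q$-deformation from \cite{MGOqR} are indispensable; the convergence and integrality statements then follow formally once the growth of $e_n$ is in hand.
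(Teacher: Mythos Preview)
Your proposal is correct, and its backbone coincides with the paper's: both rest on the ``$q$-determinant'' identity $\Rc_n\Sc_{n-1}-\Sc_n\Rc_{n-1}=\pm\,q^{a_1+\cdots+a_n-1}$ (the paper's formula~\eqref{PowerLem}), together with $\Sc_n(0)=1$, to conclude that consecutive convergents have Taylor expansions agreeing to order $a_1+\cdots+a_n-1$. Your telescoping formulation of stabilization along the convergents is exactly Proposition~\ref{TechLem}.

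The two treatments diverge in handling an arbitrary approximating sequence $(x_n)$. The paper proves a geometric lemma (Lemma~\ref{SopLem}): \emph{every} rational in the interval between the consecutive convergents $c_{m-1}$ and $c_m$ shares the first $a_1+\cdots+a_m-1$ Taylor coefficients with them; this is established by an induction over Farey triangles, using the weighted-mediant rule~\eqref{FSEq}. You instead use the classical fact that, since $x$ is irrational, the continued fraction of $x_n$ must share an initial segment $[a_1,\dots,a_{m(n)}]$ with that of $x$ with $m(n)\to\infty$, and then telescope through the convergents of $x_n$ itself to obtain $\val_q\bigl([x_n]_q-[c_{m(n)}]_q\bigr)\geq a_1+\cdots+a_{m(n)}$. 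Both routes are valid and ultimately rely on the same determinant identity; the paper's lemma yields a clean standalone statement about intervals in the Farey graph, while your argument is slightly more elementary in that it avoids the triangle induction, at the price of invoking the standard fact about common initial partial quotients. One small correction: total positivity is not needed anywhere---only the determinant computation from the matrix form~\eqref{DegEqOne}--\eqref{qRegMat} is used.
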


This statement was first observed by computer experimentation, but then a simple
proof was found.
Note that the coefficients of the polynomials in the numerator and denominator of 
the sequence of rational functions $\left[x_n\right]_q$ do not stabilize.
They grow with~$n$ infinitely at every fixed power of~$q$.

In practice, we always construct $q$-deformed real numbers using continued fractions.
Let $x\geq1$ be a real number, and
$x=\left[a_1,a_2,a_3,\ldots\right],$
where~$a_i$'s are positive integers,
its continued fraction expansion.
The sequence of rational numbers  
$$
x_n:=\left[a_1,\ldots , a_n\right]
$$
approximates~$x$;
it is called the sequence of convergents.
In this case the stabilization phenomenon
of Theorem~\ref{ConvThm} can be controlled with a greater exactness.

\begin{prop}
\label{TechLem}
Let $x\geq1$ be an irrational real number.
The Taylor expansions at $q=0$ of two consecutive $q$-deformed
convergents of the continued fraction of $x$, 
namely of $x_{n-1}=[a_1,\ldots,a_{n-1}]$ and $x_n=[a_1,\ldots,a_n]$, have the first $a_1+\cdots+a_n-1$ terms identical,
the coefficients of $q^{a_1+\cdots+a_n-1}$ differ by~$1$.
\end{prop}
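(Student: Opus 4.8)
The plan is to reduce the statement to a single determinantal identity between the numerators and denominators of the two $q$-deformed convergents, together with the elementary observation that all the denominators are invertible power series at $q=0$. Write $[x_n]_q=\Rc_n/\Sc_n$, where $\Rc_n,\Sc_n\in\Z[q]$ are the numerator and denominator produced by the $q$-continued fraction of $[a_1,\dots,a_n]$ as in \cite{MGOqR}, and set $\s_n:=a_1+\cdots+a_n$. First I would record two facts coming from the matrix (continuant) presentation of $q$-rationals in \cite{MGOqR}: (i) each $\Sc_n$ has constant term $\Sc_n(0)=1$, so that $1/\Sc_n$ and $1/(\Sc_n\Sc_{n-1})$ are well-defined formal power series with constant term $1$; and (ii) two consecutive convergents are the columns of a product of $q$-deformed continuant matrices, each of which has a \emph{monomial} determinant.

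The heart of the argument is the $q$-analogue of the classical relation $p_nq_{n-1}-p_{n-1}q_n=(-1)^n$. Taking determinants of the matrix product and accounting for the normalisation of the trailing column (which in the presentation of \cite{MGOqR} carries an extra factor of $q$), I expect to obtain the exact monomial identity
\begin{equation*}
\Rc_n\,\Sc_{n-1}-\Rc_{n-1}\,\Sc_n \;=\; \pm\, q^{\,\s_n-1}.
\end{equation*}
The key point is that the right-hand side is a \emph{pure} power of $q$, not merely a series with that leading term: the determinant of a product of matrices with monomial determinants is again a monomial, and its exponent is the sum of the per-factor determinant exponents, totalling $\s_n$ before the column normalisation and $\s_n-1$ after.

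Granting this, the conclusion is immediate. Using fact (i) that $\Sc_n\Sc_{n-1}$ has constant term $1$, one computes
\begin{equation*}
[x_n]_q-[x_{n-1}]_q \;=\; \frac{\Rc_n\Sc_{n-1}-\Rc_{n-1}\Sc_n}{\Sc_n\Sc_{n-1}} \;=\; \pm\,\frac{q^{\,\s_n-1}}{\Sc_n\Sc_{n-1}} \;=\; \pm\,q^{\,\s_n-1}\bigl(1+O(q)\bigr).
\end{equation*}
Hence the two Taylor series agree in every coefficient of $q^{k}$ with $k\le\s_n-2$ — that is, their first $\s_n-1$ coefficients coincide — while the coefficients of $q^{\,\s_n-1}$ differ by exactly $1$, which is the assertion.

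The main obstacle I anticipate is pinning down the exponent precisely, i.e.\ proving the identity with $q^{\,\s_n-1}$ rather than with some other power: this is where the bookkeeping of the $q$-weights in the continuant matrices must be carried out carefully, keeping track of the even/odd alternation in the \cite{MGOqR} continued-fraction expansion and of the $q$-shift in the trailing column. A secondary point, which I would settle using the well-definedness of $q$-rationals from \cite{MGOqR} (independence of the chosen continued-fraction representation), is that the numerator and denominator attached to $x_{n-1}=[a_1,\dots,a_{n-1}]$ really are the truncation that appears as the previous column of the length-$n$ matrix product, so that the determinantal identity applies to the very pair $(x_{n-1},x_n)$ named in the statement.
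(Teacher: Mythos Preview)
Your proposal is correct and follows essentially the same route as the paper: the paper also reduces to the identity $\Rc_n\Sc_{n-1}-\Sc_n\Rc_{n-1}=q^{a_1+\cdots+a_n-1}$, proves it by comparing the determinant of the continuant matrix $\widetilde{M}^{+}_{q}(a_1,\ldots,a_{2m})$ written in its factored form versus its column form, and then uses that $\Sc_n,\Sc_{n-1}$ have constant term $1$ to conclude. Your anticipated ``obstacles'' (pinning down the exact exponent and matching the previous column with $[a_1,\ldots,a_{n-1}]_q$) are precisely the content of the paper's equations~\eqref{DegEqOne}--\eqref{qRegMat}, so nothing further is needed.
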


With the help of a computer program, we carried out a number of tests
calculating $q$-deformations of known mathematical constants,
from the simplest golden ratio to the transcendental $e$ and~$\pi$, and checking their properties.
The most pleasant surprise for us was the appearance of the sequence
of generalized Catalan numbers (sequence A004148 of \cite{OEIS})
as coefficients~$\varkappa_k$ of the deformed golden ratio.
This remarkable ``coincidence'' and known properties of A004148 allowed us to conjecture
(and eventually prove) several properties of quadratic irrationals.

We know only very few general properties of $q$-deformed real numbers.
One of them is the action of the translation group~$\Z$ described by following formula
\begin{equation}
\label{TransEq}
\left[x+1\right]_q=q\left[x\right]_q+1,
\qquad\qquad
\left[x-1\right]_q:=\frac{\left[x\right]_q-1}{q}.
\end{equation}
Note that the second equation in~\eqref{TransEq} allows us to extend our $q$-deformations
to the case $x<1$ (including negative real numbers); 
the notation ``$:=$'' means that we use this equation as definition.

The property~\eqref{TransEq} implies the following ``gap theorem''.
\begin{thm}
\label{GapThm}
If $k\leq{}x\leq{}k+1$, where $k\in\Z_{>0}$, then the $k$-th order coefficient of the series~$\left[x\right]_q$ vanishes,
while all the preceding coefficients are equal to~$1$:
$$
\left[x\right]_q=1+q+q^2+\cdots+q^{k-1}+\varkappa_{k+1}\,q^{k+1}+\varkappa_{k+2}\,q^{k+2}+\cdots
$$
\end{thm}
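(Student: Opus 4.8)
The plan is to prove the statement by reducing it, via the translation formula~\eqref{TransEq}, to a single fact about the interval $[0,1)$. Iterating the second relation in~\eqref{TransEq} a total of $k$ times gives the identity
\[
\left[x\right]_q=q^k\left[x-k\right]_q+\left(1+q+\cdots+q^{k-1}\right),
\]
valid whenever $k\leq x$, with $y:=x-k\in[0,1)$. Reading off coefficients, the claim splits into two parts: that the coefficients of $q^0,\ldots,q^{k-1}$ equal~$1$, and that the coefficient of $q^k$ — the ``gap'' — equals the constant term of $\left[y\right]_q$.

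First I would record that $\left[z\right]_q$ has constant term~$1$ for every $z\geq1$. This is immediate from the construction: each convergent $z_n=[a_1,\ldots,a_n]$ has $a_1\geq1$, so its $q$-deformation begins with $[a_1]_q=1+q+\cdots$, forcing constant term~$1$; Theorem~\ref{ConvThm} then passes this to the limit. In particular $\left[y\right]_q=\bigl(\left[y+1\right]_q-1\bigr)/q$ is an honest power series (no pole at $q=0$) for $y\in[0,1)$. Feeding the value~$1$ back through the shift identity shows that the coefficient of $q^j$ in $\left[x\right]_q$ equals the constant term of $\left[x-j\right]_q$, which is~$1$ for every $j\leq k-1$ since then $x-j\geq1$. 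This settles the ``all ones'' part.

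Everything therefore reduces to the base case, which I expect to be the crux: the constant term of $\left[y\right]_q$ vanishes for $y\in[0,1)$, equivalently (after one shift) the coefficient of $q$ in $\left[z\right]_q$ vanishes for $z\in(1,2)$. Here I would compute directly with the $q$-continued fraction of~\cite{MGOqR}. For $z\in(1,2)$ one has $a_1=1$, so $\left[z\right]_q=1+q/T$, where $T$ is the $q$-deformed tail of the continued fraction. The key point is that $T$ has a \emph{pole} at $q=0$: the tail is a $q$-continued fraction in which a factor $q^{-1}$ is forced, either immediately by $[a_2]_{q^{-1}}$ when $a_2\geq2$, or one level deeper when $a_2=1$, where $q^{-1}$ sits over a sub-tail with finite nonzero value at $q=0$. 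In either case $q/T=O(q^2)$, so the coefficient of $q$ vanishes; I would make this rigorous on the finite convergents $z_n$ for all sufficiently large~$n$ and then invoke Theorem~\ref{ConvThm}.

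The main obstacle is precisely this base case, and within it the subtlety is the appearance of partial quotients equal to~$1$ (the golden ratio, all of whose quotients are~$1$, being the extreme example): there the linear coefficients of successive convergents are \emph{not} eventually constant in the naive sense — the truncation $z_2=[1,1]=2$ already has linear coefficient~$1$ — and one must track how the $\pm1$ jumps recorded by Proposition~\ref{TechLem} cancel so that the limiting coefficient is~$0$. The cleanest way around this, as sketched above, is to bypass the convergent-by-convergent bookkeeping and read the vanishing off the pole of the continued-fraction tail, which appears uniformly in $z$ once the expansion has at least two levels beyond $a_1$. Once the base case is in hand, the full statement for all $k$ is a formal consequence of~\eqref{TransEq}.
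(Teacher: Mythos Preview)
Your reduction via the translation formula~\eqref{TransEq} to a base case on the interval $[1,2)$ is exactly the paper's strategy: the paper packages the shift as Proposition~\ref{TroP} and the base case as Proposition~\ref{GaP}, and then simply states that the two together imply Theorem~\ref{GapThm}. So structurally you and the paper agree.

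Where you diverge is in how the base case is handled. The paper does \emph{not} analyse the pole of the continued-fraction tail. It stays with the rational-function presentation $\left[\frac{r}{s}\right]_q=\frac{\Rc}{\Sc}$ and makes the single observation (read off from the matrix formula~\eqref{DegEqOne}) that the linear coefficients of $\Rc=1+r_1q+\cdots$ and $\Sc=1+s_1q+\cdots$ satisfy $r_1=s_1$ precisely when $a_1=1$; the vanishing of the linear term of the Taylor series of $\Rc/\Sc$ is then immediate from $(1+r_1q+\cdots)(1-s_1q+\cdots)=1+(r_1-s_1)q+\cdots$. This dissolves what you call ``the main obstacle'': the equality $r_1=s_1$ holds uniformly for every rational in $(1,2)$, golden-ratio convergents included, so no tracking of the $\pm1$ jumps from Proposition~\ref{TechLem} is needed, and the passage to irrational $z$ via Theorem~\ref{ConvThm} is automatic. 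Your pole argument is correct---the tail $T$ really does blow up at $q=0$, and your case split $a_2\geq2$ versus $a_2=1$ is the right one---but it is a detour compared to the paper's one-line comparison of $r_1$ and~$s_1$.
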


Theorem~\ref{GapThm} implies that for negative~$x$ we obtain Laurent series instead of power series.
More precisely, if $-k\leq{}x<1-k$, where $k\in\Z_{>0}$, then~$\left[x\right]_q$ is of the following general form
\begin{equation}
\label{GeneralEq}
\left[x\right]_q=-q^{-k}+\varkappa_{1-k}\,q^{1-k}+\varkappa_{2-k}\,q^{2-k}+\cdots
\end{equation}
where $\varkappa_i\in\Z$.

Let us sum up our understanding of $q$-deformation, or ``quantization'', 
of real numbers in comparison with integers and rationals.
This quantization transforms integers into polynomials,
rationals into rational fractions, and real numbers into power series, in each case with integer coefficients:
$$
{\left[{\,.\,}\right]_q}\;:
\left\{
\begin{array}{rcl}
\Z_{\geq1}&\longrightarrow&\Z_{\geq0}[q],\\[6pt]
\Q_{\geq0}&{\longrightarrow}&\Z_{\geq0}(q),\\[6pt]
\R_{\geq0}&\longrightarrow&\Z[[q]],\\[6pt]
\R&\longrightarrow&\Z[[q]][q^{-1}].
\end{array}
\right.
$$
In the case of integers and rationals, the resulting polynomials and rational functions are with positive coefficients.
In the case of real numbers, this positivity consists in the fact that the power series are obtained
as limit of rationals functions with positive integer coefficients.

The paper is organized as follows.

In Section~\ref{ICFqSec} we briefly recall the notion of $q$-rational.
We start with the most elementary, recurrent way to calculate $q$-rationals,
and then give two equivalent and more explicit formulas.
The first one uses the continued fractions and the second $2\times2$ matrices.
The reader can find several other equivalent definitions in~\cite{MGOqR}.

In Section~\ref{ProoSec} we prove Theorem~\ref{ConvThm} and Proposition~\ref{TechLem}.

In Section~\ref{ExSec}
we use a computer program to
investigate several examples of $q$-deformed quadratic irrational numbers.
We start with the golden ratio and continue with the ``silver ratio'' and several examples of square roots,
giving in each case an explicit formula of the $q$-deformation.

In Section~\ref{ExSecBis} we consider two
examples of transcendental irrationals, namely~$e$ and~$\pi$.
We calculate the first terms of their $q$-deformations trying to make some observations.

In Section~\ref{TransSec} we discuss the action of the translation group and prove Theorem~\ref{GapThm}.

It would be interesting to investigate more concrete examples.
Note that we searched in vain for some functional equations similar to those of $q$-deformed quadratic irrational numbers
in the case of higher order algebraic numbers,
such as $\sqrt[\leftroot{-2}\uproot{2}3]{2}$.

\section{$q$-deformed rationals}\label{ICFqSec}

In this section,
we try to give a transparent and self-contained exposition of the notion of $q$-rational introduced in~\cite{MGOqR}.
We outline an analogy with $q$-binomial coefficients,
and give a recurrent way to compute $q$-rationals from the Farey graph.
Finally we give two explicit formulas for the $q$-rationals using continued fraction expansions and the matrix form.

\subsection{Analogy with the $q$-binomials}

Recall that the classical Gaussian $q$-binomial coefficients (see~\cite{Sta}) are polynomials in~$q$ that 
can be calculated recurrently via the formula
\begin{equation}
\label{PascalEq}
{r\choose s}_q=
{r-1\choose s-1}_q+q^s{r-1\choose s}_q.
\end{equation}
The $q$-binomial coefficients are the vertices of the ``weighted Pascal triangle'' which encodes the above formula:
$$
\xymatrix @!0 @R=0.6cm @C=0.8cm
{
&&&&{0\choose 0}_q\ar@{-}[rdd]^{\textcolor{red}{1}}\ar@{-}[ldd]_{\textcolor{red}{1}}
\\
\\
&&&{1\choose 0}_q\ar@{-}[rdd]^{\textcolor{red}{1}}\ar@{-}[ldd]_{\textcolor{red}{1}}
&&{1\choose 1}_q\ar@{-}[rdd]^{\textcolor{red}{1}}\ar@{-}[ldd]_{\textcolor{red}{q}}
\\
\\
&&{2\choose 0}_q\ar@{-}[rdd]^{\textcolor{red}{1}}\ar@{-}[ldd]_{\textcolor{red}{1}}
&&{2\choose 1}_q\ar@{-}[rdd]^{\textcolor{red}{1}}\ar@{-}[ldd]_{\textcolor{red}{q}}&&{2\choose 2}_q\ar@{-}[rdd]^{\textcolor{red}{1}}\ar@{-}[ldd]_{\textcolor{red}{q^2}}
\\
\\
&{3\choose 0}_q\ar@{-}[rdd]^{\textcolor{red}{1}}\ar@{-}[ldd]_{\textcolor{red}{1}}
&&{3\choose 1}_q\ar@{-}[rdd]^{\textcolor{red}{1}}\ar@{-}[ldd]_{\textcolor{red}{q}}
&&{3\choose 2}_q\ar@{-}[rdd]^{\textcolor{red}{1}}\ar@{-}[ldd]_{\textcolor{red}{q^2}}
&&{3\choose 3}_q\ar@{-}[rdd]^{\textcolor{red}{1}}\ar@{-}[ldd]_{\textcolor{red}{q^3}}
\\
\\
{4\choose 0}_q
&&{4\choose 1}_q&&{4\choose 2}_q&&{4\choose 3}_q&&{4\choose 4}_q
\\ 
&&&\cdots&&\cdots
}
$$
The idea behind the definition of $q$-rationals is to use exactly the same rule,
but replace the Pascal triangle by the Farey graph.

\subsection{The weighted Farey graph}\label{FaSec}

The structure of the Farey graph is as follows (see~\cite{HW}).
The set of vertices  consists of rational numbers $\Q$, completed by $\infty:=\frac{1}{0}$.
Two rationals,~$\frac{r}{s}$ and~$\frac{r'}{s'}$ (always written as irreducible fractions), 
are connected by an edge if and only if $rs'-r's=\pm1$.
Edges of the Farey graph are often represented
as (non-crossing) geodesics of the hyperbolic plane.

Although the Farey graph is much more complicated than the Pascal triangle,
in particular every vertex $\frac{r}{s}$ has infinitely many neighbors,
these two graphs have one property in common: each vertex has two ``parents''.
In the Farey graph these ``parents'' are characterized as follows.
Among the infinite set of neighbors of~$\frac{r}{s}$ there are exactly two, $\frac{r'}{s'}$ and $\frac{r''}{s''}$,
that are also connected to each other.
In other words, every rational~$\frac{r}{s}$ belongs to exactly one triangle
\begin{center}
\psscalebox{1.0 1.0} 
{
\psset{unit=0.8cm}
\begin{pspicture}(0,-1.315)(3.385,1.315)
\definecolor{colour0}{rgb}{1.0,0.0,0.2}
\psarc[linecolor=black, linewidth=0.02, dimen=outer](0.87,-0.685){0.8}{0.0}{180.0}
\psarc[linecolor=black, linewidth=0.02, dimen=outer](2.47,-0.685){0.8}{0.0}{180.0}
\psarc[linecolor=black, linewidth=0.02, dimen=outer](1.67,-0.685){1.6}{0.0}{180.0}
\rput(0.07,-1.085){$\frac{r'}{s'}$}
\rput(1.67,-1.085){$\frac{r}{s}$}
\rput(3.27,-1.085){$\frac{r''}{s''}$}
\end{pspicture}
}
\end{center}
such that $\frac{r'}{s'}<\frac{r}{s}<\frac{r''}{s''}$.
Furthermore, one has $\frac{r}{s}=\frac{r'+r''}{s'+s''}$.

Similarly to the case of $q$-binomials,
the edges of the weighted Farey graph are labeled by powers of~$q$ according to the following pattern
\begin{center}
\psscalebox{1.0 1.0} 
{
\psset{unit=0.9cm}
\begin{pspicture}(0,-1.315)(3.385,1.315)
\definecolor{colour0}{rgb}{1.0,0.0,0.2}
\psarc[linecolor=black, linewidth=0.02, dimen=outer](0.87,-0.685){0.8}{0.0}{180.0}
\psarc[linecolor=black, linewidth=0.02, dimen=outer](2.47,-0.685){0.8}{0.0}{180.0}
\psarc[linecolor=black, linewidth=0.02, dimen=outer](1.67,-0.685){1.6}{0.0}{180.0}
\rput[tl](0.87,0.515){\textcolor{colour0}{1}}
\rput[tl](2.47,0.515){\textcolor{colour0}{$q^\ell$}}
\rput[tl](1.67,1.315){\textcolor{colour0}{$q^{\ell-1}$}}
\rput(0.07,-1.085){$\left[\frac{r'}{s'}\right]_{q}$}
\rput(1.67,-1.085){$\left[\frac{r}{s}\right]_{q}$}
\rput(3.27,-1.085){$\left[\frac{r''}{s''}\right]_{q}$}
\end{pspicture}
}
\end{center}
The vertices are labeled by the following rule:
if $\left[\frac{r'}{s'}\right]_{q}=\frac{\Rc'}{\Sc'}$ and  $\left[\frac{r''}{s''}\right]_{q}=\frac{\Rc''}{\Sc''}$, then
\begin{equation}
\label{FSEq}
\left[\frac{r}{s}\right]_{q}:=\frac{\Rc'+q^{\ell}\Rc''}{\Sc'+q^{\ell}\Sc''}.
\end{equation}
Note that \eqref{FSEq} is analogous to \eqref{PascalEq}.

The weights $q^\ell$ and the $q$-rationals can be calculated recursively along the Farey graph; see Figure~\ref{wtFg}.
\begin{figure}[htbp]
\begin{center}
\psscalebox{1.0 1.0} 
{
\psset{unit=0.9cm}
\begin{pspicture}(0,-4.535)(13.757692,4.535)
\psdots[linecolor=black, dotsize=0.12](3.2788463,-2.665)
\psdots[linecolor=black, dotsize=0.12003673](3.2788463,-2.665)
\psdots[linecolor=black, dotsize=0.12](0.07884616,-2.665)
\psdots[linecolor=black, dotsize=0.12](13.678846,-2.665)
\psdots[linecolor=black, dotsize=0.12](2.478846,-2.665)
\psdots[linecolor=black, dotsize=0.12](1.6788461,-2.665)
\psdots[linecolor=black, dotsize=0.12](0.87884617,-2.665)
\psdots[linecolor=black, dotsize=0.12](0.07884616,-2.665)
\psdots[linecolor=black, dotsize=0.12](4.078846,-2.665)
\psdots[linecolor=black, dotsize=0.12](4.878846,-2.665)
\psdots[linecolor=black, dotsize=0.12](5.6788464,-2.665)
\psdots[linecolor=black, dotsize=0.12](6.478846,-2.665)
\psdots[linecolor=black, dotsize=0.12](7.2788463,-2.665)
\psdots[linecolor=black, dotsize=0.12](8.078846,-2.665)
\psdots[linecolor=black, dotsize=0.12](8.878846,-2.665)
\psdots[linecolor=black, dotsize=0.12](9.678846,-2.665)
\psdots[linecolor=black, dotsize=0.12](10.478847,-2.665)
\psdots[linecolor=black, dotsize=0.12](11.278846,-2.665)
\psdots[linecolor=black, dotsize=0.12](12.078846,-2.665)
\psdots[linecolor=black, dotsize=0.12](12.878846,-2.665)
\psarc[linecolor=black, linewidth=0.02, dimen=outer](6.878846,-2.665){6.8}{0.0}{180.0}
\psdots[linecolor=black, dotsize=0.12](13.678846,-2.665)
\psarc[linecolor=black, linewidth=0.02, dimen=outer](6.878846,-2.665){0.4}{0.0}{180.0}
\psarc[linecolor=black, linewidth=0.02, dimen=outer](7.6788464,-2.665){0.4}{0.0}{180.0}
\psarc[linecolor=black, linewidth=0.02, dimen=outer](8.478847,-2.665){0.4}{0.0}{180.0}
\psarc[linecolor=black, linewidth=0.02, dimen=outer](9.278846,-2.665){0.4}{0.0}{180.0}
\psarc[linecolor=black, linewidth=0.02, dimen=outer](10.078846,-2.665){0.4}{0.0}{180.0}
\psarc[linecolor=black, linewidth=0.02, dimen=outer](10.878846,-2.665){0.4}{0.0}{180.0}
\psarc[linecolor=black, linewidth=0.02, dimen=outer](11.678846,-2.665){0.4}{0.0}{180.0}
\psarc[linecolor=black, linewidth=0.02, dimen=outer](12.478847,-2.665){0.4}{0.0}{180.0}
\psarc[linecolor=black, linewidth=0.02, dimen=outer](13.278846,-2.665){0.4}{0.0}{180.0}
\psarc[linecolor=black, linewidth=0.02, dimen=outer](0.47884616,-2.665){0.4}{0.0}{180.0}
\psarc[linecolor=black, linewidth=0.02, dimen=outer](1.2788461,-2.665){0.4}{0.0}{180.0}
\psarc[linecolor=black, linewidth=0.02, dimen=outer](2.0788462,-2.665){0.4}{0.0}{180.0}
\psarc[linecolor=black, linewidth=0.02, dimen=outer](2.8788462,-2.665){0.4}{0.0}{180.0}
\psarc[linecolor=black, linewidth=0.02, dimen=outer](3.6788461,-2.665){0.4}{0.0}{180.0}
\psarc[linecolor=black, linewidth=0.02, dimen=outer](4.478846,-2.665){0.4}{0.0}{180.0}
\psarc[linecolor=black, linewidth=0.02, dimen=outer](5.2788463,-2.665){0.4}{0.0}{180.0}
\psarc[linecolor=black, linewidth=0.02, dimen=outer](6.078846,-2.665){0.4}{0.0}{180.0}
\psarc[linecolor=black, linewidth=0.02, dimen=outer](1.6788461,-2.665){0.8}{0.0}{180.0}
\psarc[linecolor=black, linewidth=0.02, dimen=outer](3.2788463,-2.665){0.8}{0.0}{180.0}
\psarc[linecolor=black, linewidth=0.02, dimen=outer](4.878846,-2.665){0.8}{0.0}{180.0}
\psarc[linecolor=black, linewidth=0.02, dimen=outer](6.478846,-2.665){0.8}{0.0}{180.0}
\psarc[linecolor=black, linewidth=0.02, dimen=outer](8.078846,-2.665){0.8}{0.0}{180.0}
\psarc[linecolor=black, linewidth=0.02, dimen=outer](9.678846,-2.665){0.8}{0.0}{180.0}
\psarc[linecolor=black, linewidth=0.02, dimen=outer](11.278846,-2.665){0.8}{0.0}{180.0}
\psarc[linecolor=black, linewidth=0.02, dimen=outer](12.878846,-2.665){0.8}{0.0}{180.0}
\psarc[linecolor=black, linewidth=0.02, dimen=outer](2.478846,-2.665){1.6}{0.0}{180.0}
\psarc[linecolor=black, linewidth=0.02, dimen=outer](4.878846,-2.665){0.8}{0.0}{180.0}
\psarc[linecolor=black, linewidth=0.02, dimen=outer](5.6788464,-2.665){1.6}{0.0}{180.0}
\psarc[linecolor=black, linewidth=0.02, dimen=outer](8.878846,-2.665){1.6}{0.0}{180.0}
\psarc[linecolor=black, linewidth=0.02, dimen=outer](12.078846,-2.665){1.6}{0.0}{180.0}
\psarc[linecolor=black, linewidth=0.02, dimen=outer](4.078846,-2.665){3.2}{0.0}{180.0}
\psarc[linecolor=black, linewidth=0.02, dimen=outer](10.478847,-2.665){3.2}{0.0}{180.0}
\psarc[linecolor=black, linewidth=0.02, dimen=outer](7.2788463,-2.665){6.4}{0.0}{180.0}
\rput[b](6.878846,3.735){\textcolor{red}{1}}
\rput[t](4.078846,0.935){\textcolor{red}{1}}
\rput[t](10.478847,0.935){\textcolor{red}{$q$}}
\rput[t](2.478846,-0.665){\textcolor{red}{1}}
\rput[t](5.6788464,-0.665){\textcolor{red}{$q$}}
\rput[t](8.878846,-0.665){\textcolor{red}{1}}
\rput[t](12.078846,-0.665){\textcolor{red}{$q^2$}}
\rput[t](12.878846,-1.465){\textcolor{red}{$q^3$}}
\rput[b](11.278846,-1.865){\textcolor{red}{1}}
\rput[b](8.078846,-1.865){\textcolor{red}{1}}
\rput[b](9.678846,-1.865){\textcolor{red}{$q$}}
\rput[b](1.6788461,-1.865){\textcolor{red}{1}}
\rput[b](3.2788463,-1.865){\textcolor{red}{$q$}}
\rput[t](6.478846,-1.465){\textcolor{red}{$q^2$}}
\rput[b](4.878846,-1.865){\textcolor{red}{1}}
\rput[t](0.47884616,-1.865){\textcolor{red}{1}}
\rput[t](6.078846,4.535){\textcolor{red}{$q^{-1}$}}
\rput(0.07884616,-3.065){$\frac01$}
\rput(0.87884617,-3.065){$\frac11$}
\rput(1.7,-3.065){$\left[\frac{5}{4}\right]_q$}
\rput(13.678846,-3.065){$\frac10$}
\rput(7.3,-3.065){$\left[\frac{2}{1}\right]_q$}
\rput(4.1,-3.065){$\left[\frac{3}{2}\right]_q$}
\rput(4.9,-3.065){$\left[\frac{8}{5}\right]_q$}
\rput(10.6,-3.065){$\left[\frac{3}{1}\right]_q$}
\rput(2.478846,-3.065){$\left[\frac{4}{3}\right]_q$}
\rput(3.3,-3.065){$\left[\frac{7}{5}\right]_q$}
\rput(5.7,-3.065){$\left[\frac{5}{3}\right]_q$}
\rput(6.5,-3.065){$\left[\frac{7}{3}\right]_q$}
\rput(8.178847,-3.065){$\left[\frac{5}{2}\right]_q$}
\rput(12.2,-3.065){$\left[\frac{4}{1}\right]_q$}
\rput(12.99,-3.065){$\left[\frac{5}{1}\right]_q$}
\rput[b](1.2788461,-2.265){\textcolor{red}{\footnotesize$1$}}
\rput[b](2.0788462,-2.265){\textcolor{red}{\footnotesize$q$}}
\rput[b](2.8788462,-2.265){\textcolor{red}{\footnotesize$1$}}
\rput[b](4.478846,-2.265){\textcolor{red}{\footnotesize$1$}}
\rput[b](6.078846,-2.265){\textcolor{red}{\footnotesize$1$}}
\rput[b](7.6788464,-2.265){\textcolor{red}{\footnotesize$1$}}
\rput[b](9.278846,-2.265){\textcolor{red}{\footnotesize$1$}}
\rput[b](10.878846,-2.265){\textcolor{red}{\footnotesize$1$}}
\rput[b](12.478847,-2.265){\textcolor{red}{\footnotesize$1$}}
\rput[br](3.6788461,-2.265){\textcolor{red}{\footnotesize$q^2$}}
\rput[br](5.2788463,-2.265){\textcolor{red}{\footnotesize$q$}}
\rput[br](6.878846,-2.265){\textcolor{red}{\footnotesize$q^3$}}
\rput[br](8.478847,-2.265){\textcolor{red}{\footnotesize$q$}}
\rput[br](10.078846,-2.265){\textcolor{red}{\footnotesize$q^2$}}
\rput[br](11.678846,-2.265){\textcolor{red}{\footnotesize$q$}}
\rput[br](13.278846,-2.265){\textcolor{red}{\footnotesize$q^4$}}
\rput(9.0,-3.065){$\left[\frac{8}{3}\right]_q$}
\rput(9.8,-3.065){$\left[\frac{11}{4}\right]_q$}
\rput(11.408846,-3.065){$\left[\frac{7}{2}\right]_q$}
\end{pspicture}
}
\caption{Upper part of the weighted Farey graph between~$\frac01$ and~$\frac10$}
\label{wtFg}
\end{center}
\end{figure}
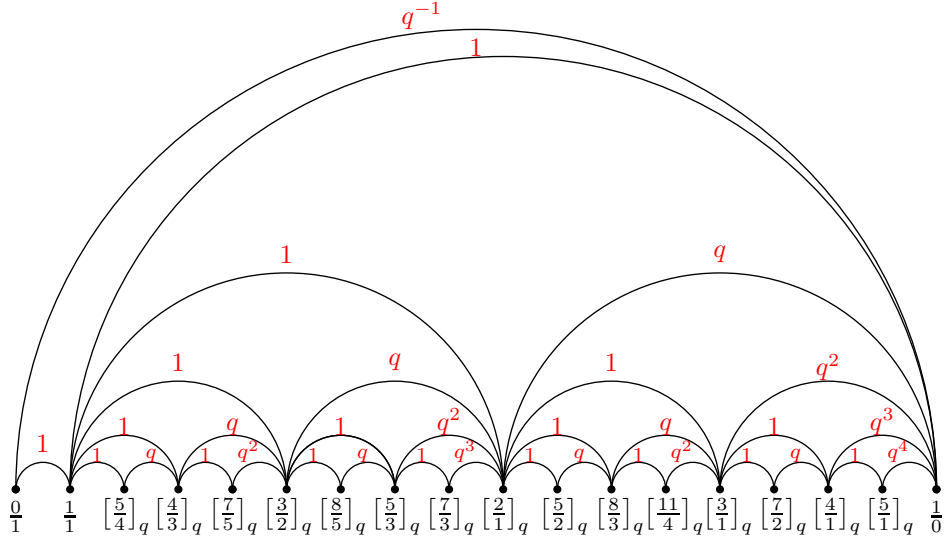

\noindent
For instance, $\left[\frac{7}{5}\right]_q=\frac{1+q+2q^2+2q^3+q^4}{1+q+2q^2+q^3}$.
We will be interested by
the corresponding Taylor series at $q=0$.
For instance, for the above example the series starts as follows
$$
\left[\frac{7}{5}\right]_q=
1+ q^3 - 2q^5 + q^6 + 3q^7 - 3q^8- 4q^9 + 7q^{10} + 4q^{11}- 14q^{12} \pm\cdots
$$

\subsection{$q$-deformed continued fractions}
We will now give another method of computing $q$-rationals.
It is relied on the continued fraction expansion.

Given a rational number $\frac{r}{s}>1$ , where $r$ and $s$ are positive integers assumed to be coprime.
It has a unique continued fraction expansion with even number of terms
\begin{equation}
\label{CFEq}
\frac{r}{s}
\quad=\quad
a_1 + \cfrac{1}{a_2 
          + \cfrac{1}{\ddots +\cfrac{1}{a_{2m}} } },
\end{equation}
with $a_i\geq1$, denoted by
$[a_1,\ldots,a_{2m}]$.
Note that the choice of even length removes the ambiguity
$[a_1,\ldots,a_{n},1]=[a_1,\ldots,a_{n}+1]$ and makes the expansion unique.

In order to calculate the $q$-deformation $\left[\frac{r}{s}\right]_q$,
one can use the following explicit formula.
Given a regular continued fraction $[a_{1}, \ldots, a_{2m}]$,
its $q$-deformation is given by
\begin{equation}
\label{qa}
[a_{1}, \ldots, a_{2m}]_{q}:=
[a_1]_{q} + \cfrac{q^{a_{1}}}{[a_2]_{q^{-1}} 
          + \cfrac{q^{-a_{2}}}{[a_{3}]_{q} 
          +\cfrac{q^{a_{3}}}{[a_{4}]_{q^{-1}}
          + \cfrac{q^{-a_{4}}}{
        \cfrac{\ddots}{[a_{2m-1}]_q+\cfrac{q^{a_{2m-1}}}{[a_{2m}]_{q^{-1}}}}}
          } }} 
\end{equation}
where we use the standard notation for the $q$-integers~$[a]_q=1+q+q^2+\cdots+q^{a-1}$.
Of course, one can get rid of negative exponents in~\eqref{qa}, but the formula becomes uglier.

\subsection{The matrix formulas}
We give another, equivalent, form to define $q$-rationals.
It uses $2\times2$ matrices and is well adapted for computer programming.

Let, as before, $\frac{r}{s}$ be a rational written in the form of a continued fraction expansion~\eqref{CFEq}.
Consider the $2\times2$ matrix with polynomial coefficients that was denoted by $\widetilde{M}^{+}_{q}(a_{1},\ldots, a_{2m})$
in~\cite{MGOqR}. 
It is defined by the formula
\begin{equation}
\label{DegEqOne}
\widetilde{M}^{+}_{q}(a_{1},\ldots, a_{2m}):=
\begin{pmatrix}
[a_{1}]_{q}&q^{a_{1}}\\[6pt]
1&0
\end{pmatrix}
\begin{pmatrix}
q[a_{2}]_{q}& 1\\[6pt]
q^{a_{2}}&0
\end{pmatrix}
\cdots
\begin{pmatrix}
[a_{2m-1}]_{q}&q^{a_{2m-1}}\\[6pt]
1&0
\end{pmatrix}
\begin{pmatrix}
q[a_{2m}]_{q}&1\\[6pt]
q^{a_{2m}}&0
\end{pmatrix}.
\end{equation}
This matrix is a $q$-analogue of the usual ``matrix of convergents'' of a continued fraction (see~\cite{Never,MGO}).

It is easy to prove that this matrix contains the numerator and denominator of~$\left[\frac{r}{s}\right]_q$ (up to multiplication by~$q$)
in the first column:
\begin{equation}
\label{qRegMat}
\widetilde{M}^{+}_{q}(a_{1},\ldots, a_{2m})=
\begin{pmatrix}
q\Rc&\Rc'_{2m-1}\\[6pt]
q\Sc&\Sc'_{2m-1}
\end{pmatrix},
\end{equation}
where $\frac{\Rc(q)}{\Sc(q)}=\left[\frac{r}{s}\right]_q, $ and where 
$\frac{\Rc'_{2m-1}(q)}{\Sc'_{2m-1}(q)}=[a_{1}, \ldots,a_{2m-1}]_{q}$ is the previous convergent.

\section{The stabilization phenomenon}\label{ProoSec}

In this section we 
first collect some basic properties that follow from the matrix presentation.
We prove Proposition~\ref{TechLem} and Theorem~\ref{ConvThm}.
Finally, in a remark, we discuss the stabilization phenomenon in the case when $x$ is rational.

\subsection{Some simple properties of $q$-rationals}
The following statements are immediate corollaries of~\eqref{DegEqOne} and~\eqref{qRegMat}.

Let $\frac{r}{s}\geq1$ be a rational number.
Then $\left[\frac{r}{s}\right]_q=\frac{\Rc(q)}{\Sc(q)}$, where 
$\Rc(q)$ and $\Sc(q)$ 
are polynomials with positive coefficients whose highest and lowest coefficients are equal to~$1$.
The degrees of the polynomials~${\Rc}$ and~${\Sc}$ are as follows
$$
\begin{array}{rcl}
\deg(\Rc)&=&a_{1}+\ldots +a_{2m}-1,\\[4pt]
\deg(\Sc)&=&a_{2}+\ldots +a_{2m}-1.
\end{array}
$$
The unimodality conjecture of~\cite{MGOqR} states that the coefficients of~${\Rc}$ and ${\Sc}$ first grow and then decrease monotonically.

Let us mention that the most important properties of $q$-rationals 
are the total positivity and the combinatorial interpretation of the coefficients of~$\Rc$ and~$\Sc$.

\subsection{Proof of Proposition~\ref{TechLem}}\label{PrPSec}

Let~$x\geq1$ be a real number, and let $x_{n-1}$ and~$x_n$ be two consecutive convergents of its continued fraction.
Let 
$$
\left[x_{n-1}\right]_q=\frac{\Rc_{n-1}}{\Sc_{n-1}}
\qquad\hfill{and}\qquad
\left[x_n\right]_q=\frac{\Rc_n}{\Sc_n},
$$
One then has tautologically
$$
\frac{\Rc_n}{\Sc_n}-
\frac{\Rc_{n-1}}{\Sc_{n-1}}=
\frac{\Rc_n\Sc_{n-1}-\Sc_n\Rc_{n-1}}{\Sc_n\Sc_{n-1}}.
$$
The polynomial in the numerator of the right-hand-side is a power of~$q$:
\begin{equation}
\label{PowerLem}
\Rc_n\Sc_{n-1}-\Sc_n\Rc_{n-1}=q^{a_1+\cdots+a_n-1}.
\end{equation}
Indeed, to prove this,
it suffices to compare the determinant of the matrix $\widetilde{M}^{+}_{q}(a_{1},\ldots, a_{2m})$
written in the forms~\eqref{DegEqOne} and~\eqref{qRegMat}.

Both polynomials,
$\Sc_n$ and $\Sc_{n-1}$, start with zero-order term~$1$, and so is the series $1/(\Sc_n\Sc_{n-1})$.
It follows now from~\eqref{PowerLem}, that the series $\left[x_n\right]_q-\left[x_{n-1}\right]_q$ is of the form 
$$
\left[x_n\right]_q-\left[x_{n-1}\right]_q=q^{a_1+\cdots+a_n-1}+O(q^{a_1+\cdots+a_n}).
$$
Hence, Proposition~\ref{TechLem}.

\subsection{Proof of Theorem~\ref{ConvThm}}

Let now $(y_n)_{n\geq1}$ be an arbitrary sequence of rationals converging to~$x$.
Then, for every fixed~$m$, there exists~$N$ such that
$y_n\in[x_{m-1},x_m]$ (or $y_n\in[x_{m},x_{m-1}])$ for every~$n\geq{}N$.
Here, as above, $(x_n)_{n\geq1}$ is the sequence of convergents of the continued fraction of~$x$.

By Proposition~\ref{TechLem}, the first $a=a_1+\cdots+a_{m}-1$ terms 
of the Taylor series of~$\left[x_{m-1}\right]_q$ and~$\left[x_m\right]_q$ coincide.
It turns out that the same is true for every rational between $x_{m-1}$ and~$x_m$.

\begin{lem}
\label{SopLem}
For every rational~$\frac{r}{s}$ such that, $x_{m-1}<\frac{r}{s}<x_m$, 
the first $a=a_1+\cdots+a_{m}-1$ terms  of the Taylor series of~$\left[\frac{r}{s}\right]_q$ coincides with those of~$\left[x_{m-1}\right]_q$ and~$\left[x_m\right]_q$.
\end{lem}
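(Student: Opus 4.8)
The plan is to reduce the claim about an arbitrary rational $\frac{r}{s}$ strictly between two consecutive convergents $x_{m-1}$ and $x_m$ to the already-proven Proposition~\ref{TechLem}, by inserting $\frac{r}{s}$ into a \emph{finer} continued-fraction ladder and applying the stabilization estimate along that refined path. The key observation is that Proposition~\ref{TechLem} is not really about convergents per se: its proof rests on the identity $\Rc_n\Sc_{n-1}-\Sc_n\Rc_{n-1}=q^{a_1+\cdots+a_n-1}$, which holds for \emph{any} two Farey neighbors via the determinant computation. So the first thing I would do is reformulate the engine of Proposition~\ref{TechLem} as a statement about Farey-adjacent rationals: if $\frac{p}{q}$ and $\frac{p'}{q'}$ are Farey neighbors (so that $pq'-p'q=\pm1$), then their $q$-deformations agree in their Taylor series up to order determined by the appropriate sum of continued-fraction entries, with the first differing coefficient being $\pm1$.

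\textbf{The main steps.}

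First I would recall the Farey-tree structure between $x_{m-1}$ and $x_m$. These two convergents are themselves Farey neighbors, and every rational $\frac{r}{s}$ with $x_{m-1}<\frac{r}{s}<x_m$ lies in the interval they bound and is obtained from them by repeated mediant insertion. Concretely, $\frac{r}{s}$ has a continued fraction expansion that \emph{extends} the common prefix $[a_1,\ldots,a_{m-1}]$ (or $[a_1,\ldots,a_m]$), so that the first $m-1$ (resp.\ $m$) partial quotients of $\frac{r}{s}$ coincide with those of the convergents. Second, I would track the order of agreement: for any $\frac{r}{s}$ in the open interval, its continued fraction begins with partial quotients whose initial segment forces $a_1+\cdots+a_{m-1}$ to be the relevant cutoff, and I would argue that inserting further partial quotients only \emph{increases} the order at which a discrepancy can first appear. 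The cleanest way is to use the telescoping argument from Section~\ref{PrPSec}: writing $\frac{r}{s}$ on the mediant path from $x_{m-1}$ to $x_m$, the successive differences
$$
\left[\tfrac{r_{j+1}}{s_{j+1}}\right]_q-\left[\tfrac{r_j}{s_j}\right]_q
= \frac{q^{e_j}}{\Sc_{j+1}\Sc_j}
$$
each have valuation $e_j$ with $e_j\geq a_1+\cdots+a_m-1=a$, since each mediant step corresponds to appending partial quotients that only enlarge the exponent in~\eqref{PowerLem}. Summing these differences, the valuation of $\left[\frac{r}{s}\right]_q-\left[x_{m-1}\right]_q$ is at least $a$, which is exactly the assertion.

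\textbf{The main obstacle.}

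The delicate point is the uniform lower bound on the valuations $e_j$ along the mediant path. Proposition~\ref{TechLem} gives the exponent for \emph{consecutive convergents}, but here I need that every intermediate mediant, no matter how deep in the Farey tree, produces a difference of valuation at least $a=a_1+\cdots+a_m-1$, and never drops below it. The safest route is to observe that each Farey neighbor pair $(\frac{r_j}{s_j},\frac{r_{j+1}}{s_{j+1}})$ bounding a subinterval of $[x_{m-1},x_m]$ satisfies the unimodular relation, so the numerator of their difference is again a single power $q^{e_j}$ by the same determinant identity; then I must check $e_j\geq a$. Here the monotonicity of the valuation under mediant refinement is what I would have to pin down carefully, using that appending positive partial quotients $a_{m+1},a_{m+2},\ldots$ strictly increases the total $a_1+\cdots+a_{m}+\cdots$. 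Once that monotonicity is established, the conclusion follows from the fact that $1/(\Sc_j\Sc_{j+1})$ has constant term~$1$, so multiplying by $q^{e_j}$ preserves the leading valuation, and the finite sum over $j$ cannot lower it below $a$.
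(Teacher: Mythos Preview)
Your approach is correct and close in spirit to the paper's, though organised differently. Both proofs rest on the same engine: the determinant identity~\eqref{PowerLem}, which forces the numerator of the difference of two Farey-adjacent $q$-rationals to be a single power of~$q$, combined with an induction along a path in the Farey graph from $x_{m-1}$ to $\frac{r}{s}$.

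The difference is in how the path and the base step are packaged. The paper inducts on Farey \emph{triangles}: it first treats the case where $\frac{r}{s}$ is the weighted mediant of $x_{m-1}$ and $x_m$, using the rule $\frac{\Rc}{\Sc}=\frac{\Rc_{m-1}+q^\ell\Rc_m}{\Sc_{m-1}+q^\ell\Sc_m}$ to compute directly that $\Rc\Sc_{m-1}-\Sc\Rc_{m-1}=q^{a+\ell}$ and $\Rc\Sc_m-\Sc\Rc_m=-q^{a}$; then it reaches a general $\frac{r}{s}$ by a chain of such triangles (the vertical line in the Poincar\'e half-plane picture). Your version instead takes the continued fraction $[c_1,\ldots,c_N]$ of $\frac{r}{s}$ and walks along \emph{its} convergents, applying Proposition~\ref{TechLem} at each step. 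This is legitimate, but your ``main obstacle'' needs one more sentence than you gave it: you must use that any rational strictly between $x_{m-1}$ and $x_m$ has $c_i=a_i$ for $i\leq m-1$ and $c_m\geq a_m$ (not merely that further quotients are appended after position~$m$). Once that is said, $c_1+\cdots+c_j-1\geq a$ for all $j\geq m$ and the telescoping goes through. The paper's triangle computation sidesteps this continued-fraction bookkeeping entirely, which is arguably cleaner; your route has the advantage of recycling Proposition~\ref{TechLem} verbatim rather than redoing the cross-ratio calculation.
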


\begin{proof}
Let~$\left[x_{m-1}\right]_q=\frac{\Rc_{m-1}}{\Sc_{m-1}}$ and~$\left[x_{m}\right]_q=\frac{\Rc_{m}}{\Sc_{m}}$.
Recall that  $x_{m-1}$ and~$x_m$ are joined by an edge in the Farey graph.

Suppose first that~$\frac{r}{s}$ is also joint to $x_{m-1}$ and~$x_m$, so that we have a triangle
\begin{center}
\psscalebox{1.0 1.0} 
{
\psset{unit=0.9cm}
\begin{pspicture}(0,-1.315)(3.385,1.315)
\definecolor{colour0}{rgb}{1.0,0.0,0.2}
\psarc[linecolor=black, linewidth=0.02, dimen=outer](0.87,-0.685){0.8}{0.0}{180.0}
\psarc[linecolor=black, linewidth=0.02, dimen=outer](2.47,-0.685){0.8}{0.0}{180.0}
\psarc[linecolor=black, linewidth=0.02, dimen=outer](1.67,-0.685){1.6}{0.0}{180.0}
\rput[tl](0.87,0.515){\textcolor{colour0}{1}}
\rput[tl](2.47,0.515){\textcolor{colour0}{$q^\ell$}}
\rput[tl](1.67,1.315){\textcolor{colour0}{$q^{\ell-1}$}}
\rput(0.07,-1.085){$\frac{\Rc_{m-1}}{\Sc_{m-1}}$}
\rput(1.67,-1.085){$\left[\frac{r}{s}\right]_{q}$}
\rput(3.27,-1.085){$\frac{\Rc_m}{\Sc_m}$}
\end{pspicture}
}
\end{center}
Set $\left[\frac{r}{s}\right]_q=\frac{\Rc}{\Sc}$, then, by definition of $q$-rationals,
$$
\frac{\Rc}{\Sc}=\frac{\Rc_{m-1}+q^\ell\Rc_m}{\Sc_{m-1}+q^\ell\Sc_m}
$$
By~\eqref{PowerLem}, we have $\Rc_{m}\Sc_{m-1}-\Sc_{m}\Rc_{m-1}=q^a$.
Therefore, 

\begin{equation}\label{eqdisym}
\begin{array}{rclcl}
\Rc\Sc_{m-1}-\Sc\Rc_{m-1}&=&\Rc_{m-1}\Sc_{m-1}+q^\ell\Rc_m\Sc_{m-1}-\Rc_{m-1}\Sc_{m-1}-q^\ell\Rc_{m-1}\Sc_m&=&q^{a+\ell}\\[4pt]
\Rc\Sc_m-\Sc\Rc_m&=&\Rc_{m-1}\Sc_m+q^\ell\Rc_m\Sc_m-\Rc_m\Sc_{m-1}-q^\ell\Rc_m\Sc_m&=&-q^{a}.
\end{array}
\end{equation}

Using the same argument as in the proof of Proposition~\ref{TechLem}
we deduce the statement of the lemma in the case where the three points form a triangle.

The general case of the lemma can then be proved inductively.
Indeed, every rational~$\frac{r}{s}$ such that, $x_{m-1}<\frac{r}{s}<x_m$ can be joined to $x_{m-1}$ and~$x_m$
by a sequence of triangles.
To see this, draw a vertical line
in the Poincar\'e half-plane through~$\frac{r}{s}$ and collect all the triangles of the Farey tessellation
between $x_{m-1}$ and~$x_m$ crossed in their interior by this line.

Hence, the lemma.
\end{proof}

Theorem~\ref{ConvThm} follows from Lemma~\ref{SopLem} and Proposition~\ref{TechLem}.

\begin{rem}
We also investigated the stabilization phenomenon in the case when $x$ is rational.
If $x=\frac{r}{s}$ and $\left(\frac{r_n}{s_n}\right)_{n\geq1}$ is a sequence converging to $x$ it is natural to ask whether 
$[x]_q$ defined by \eqref{TSEqBis} is equal to the $q$-rational $\left[\frac{r}{s}\right]_q$. 
The answer is surprising: when the sequence $\left(\frac{r_n}{s_n}\right)$ approaches $\frac{r}{s}$ from the right the stabilized power series defined by \eqref{TSEqBis} is equal to the $q$-rational 
$\left[\frac{r}{s}\right]_q$, when the sequence approaches the rational from the left this is no longer true. This is due to the asymmetry of the relations \eqref{eqdisym}.

Indeed, if $\frac{r_n}{s_n}>\frac{r}{s}$ for all $n>\!\!>0$, one can use the same arguments as in the proof of Theorem~\ref{ConvThm} by replacing the sequence of convergents $x_n$ by
the sequence of right neighbors $\frac{nr+r''}{ns+s''}$, where $\frac{r''}{s''}$ is the right parent introduced in Section~\ref{FaSec}.
Considering Taylor expansions, the right neighbors $\left[\frac{nr+r''}{ns+s''}\right]_q$ have more and more first terms identical to those of $\left[\frac{r}{s}\right]_q$ as $n$ grows.

If $\frac{r_n}{s_n}<\frac{r}{s}$ for all $n>\!\!>0$ the above arguments no longer apply.
However, with experimental computations, we observed some
stabilization phenomenons for the sequences~$\left[\frac{r_n}{s_n}\right]_q$, 
but the stabilized power series is different from $\left[\frac{r}{s}\right]_q$.  
For instance, testing several sequences of rational approaching the integer $2$ with smaller values we always obtained a stabilization to the series $1+q^2$ which is not $[2]_{q}=1+q$.

\end{rem}

\section{$q$-deformations of quadratic irrationals}\label{ExSec}

In this section we discuss several examples of quadratic irrational numbers.
We start from the simplest possible case of the golden ratio and identify the coefficients of the
Taylor series as the remarkable and thoroughly studied sequence of generalized Catalan numbers.
We dwell on this first example with more details to better explain the stabilization phenomenon.
We then calculate the $q$-deformation of the number $1+\sqrt{2}$, which is usually called the ``silver ratio''.
Finally, we consider several examples of square roots of small positive integers
and calculate the corresponding functional equations.

\subsection{The golden ratio and generalized Catalan numbers}\label{GRSec}

The simplest example of an infinite continued fraction is the expansion of the golden ratio:
$$
\varphi=\frac{1+\sqrt{5}}{2}=
\left[1, 1, 1, 1, 1, \ldots\right].
$$
The convergents are ratios of consecutive Fibonacci numbers:
$\varphi_n=F_{n+1}/F_n$.

According to~\eqref{qa}, the $q$-deformation of~$\varphi$ is given by the $2$-periodic infinite
continued fraction
\begin{equation}
\label{RRAlterEq}
\left[\varphi\right]_q=
1 + \cfrac{q^{2}}{q
          + \cfrac{1}{1 
          +\cfrac{q^{2}}{q
          + \cfrac{1}{\ddots
       }}}} 
\end{equation}

\begin{rem}
Let us mention that there exists a celebrated $q$-deformation of the golden ratio, called the Rogers-Ramanujan continued fraction.
It is aperiodic and has a great number of beautiful and sophisticated properties (for a survey, see~\cite{RR}).
Unlike the Rogers-Ramanujan continued fraction, we do not know if~\eqref{RRAlterEq} is a quotient of two $q$-series with positive coefficients.
\end{rem}

Consider the $q$-deformations of the convergents $\left[\varphi_n\right]_q$.
We proved in~\cite{MGOqR} that the coefficients of the polynomials of
$\left[\varphi_n\right]_q$ are the numbers appearing in the remarkable ``Fibonacci lattice''
(see A123245 of OEIS~\cite{OEIS} and its mirror A079487).
More precisely, A123245 appears in the numerator and A079487 in the denominator.
For instance,
$$
\begin{array}{rcl}
\left[\varphi_6\right]_q&=&\displaystyle\frac{1+2q+3q^2+3q^3+3q^4+q^5}{1+2q+2q^2+2q^3+q^4},
\\[12pt]
\left[\varphi_8\right]_q&=&\displaystyle\frac{1+3q+5q^2+7q^3+7q^4+6q^5+4q^6+q^7}{1+3q+4q^2+5q^3+4q^4+3q^5+q^6},
\\[12pt]
\left[\varphi_9\right]_q&=&\displaystyle\frac{1+4q+7q^2+10q^3+11q^4+10q^5+7q^6+4q^7+q^8}{1+4q+6q^2+7q^3+7q^4+5q^5+3q^6+q^7}.
\end{array}
$$
We see that the coefficients at every fixed power of~$q$ of the numerator and the denominator grow.
To illustrate the stabilization phenomenon, we give the corresponding Taylor series:
$$
\begin{array}{rcl}
\left[\varphi_6\right]_q&=&
1 + q^2 - q^3 + 2 q^4 - 3 q^5 + 3 q^6 - 3 q^7 + 4 q^8 - 5 q^9 + 5 q^{10} - 5 q^{11} + 6 q^{12}\cdots\\[4pt]
\left[\varphi_8\right]_q&=&
1 + q^2 - q^3 + 2 q^4 - 4 q^5 + 8 q^6 - 16 q^7 + 30 q^8 - 55 q^9 + 103 q^{10} - 195 q^{11} + 368 q^{12}\cdots\\[4pt]
\left[\varphi_9\right]_q&=&
1 + q^2 - q^3 + 2 q^4 - 4 q^5 + 8 q^6 - 17 q^7 + 37 q^8 - 82 q^9 + 184 q^{10} - 414 q^{11} + 932 q^{12}\cdots
\end{array}
$$
The series $\left[\varphi_9\right]$ approximates $\left[\varphi\right]_q$ correctly up to the $9$th term, while $\left[\varphi_6\right]$ only up to $q^4$.

The full series~\eqref{RRAlterEq}
starts as follows:
$$
\begin{array}{rcl}
\left[\varphi\right]_q&=&
1 + q^2 - q^3 + 2 q^4 - 4 q^5 + 8 q^6 - 17 q^7 + 37 q^8 - 82 q^9 + 185 q^{10} - 423 q^{11} + 978 q^{12}-2283q^{13}\\[4pt]
&&+ 5373q^{14}-12735q^{15}+30372q^{16}-72832q^{17}+175502q^{18}-424748q^{19}+1032004q^{20} \cdots
\end{array}
$$
Note that one needs the $n$th convergent $\left[\varphi_n\right]_q$ to calculate~$\left[\varphi\right]_q$ with accuracy up to~$q^n$.

Fix the notation
$$
\left[\varphi\right]_q=:\sum_{k\geq0}\phi_kq^k.
$$ 
We were able to identify the coefficients $\phi_k$ appearing in this series 
as the so-called Generalized Catalan numbers
(see sequence A004148 of OEIS), but with alternating signs.

\begin{prop}
\label{GoldConj}
One has $\phi_k=(-1)^ka_{k-1}$, for $k\geq2$, where $a_k$ are the Generalized Catalan numbers; see~{\rm A004148} of~\cite{OEIS}.
\end{prop}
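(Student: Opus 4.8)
The plan is to show that the power series $Y:=\left[\varphi\right]_q=\sum_{k\geq0}\phi_kq^k$ and the ordinary generating function $A(x)=\sum_{n\geq0}a_nx^n$ of the generalized Catalan numbers are governed by the \emph{same} quadratic relation, and then to read off the coefficients. Concretely, I would first derive a quadratic equation for $Y$ over $\Z[q]$, observe that $Y$ is its unique formal power series solution with constant term $1$, and finally identify this solution with a simple substitution of $A$.

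The first step, which is the heart of the argument, is the functional equation for $Y$. The continued fraction~\eqref{RRAlterEq} is purely $2$-periodic, so at the level of the even convergents $c_m:=\left[\varphi_{2m}\right]_q$ one has the recursion $c_{m+1}=f(c_m)$, where $f(t):=1+\dfrac{q^2t}{qt+1}$. I would justify this through the matrix form~\eqref{DegEqOne}--\eqref{qRegMat}: since $c_m$ is produced by the $m$-th power of the two-term block $\widetilde M^{+}_{q}(1,1)$, the passage $c_m\mapsto c_{m+1}$ is exactly the M\"obius transformation attached to that block, which is $f$ (and one checks $f(c_1)=c_2$, $f(c_2)=c_3$ against the convergents listed in the paper as a cross-check). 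By Theorem~\ref{ConvThm} the $c_m$ converge coefficientwise to $Y$, and since $f(t_1)-f(t_2)=q^2(t_1-t_2)\bigl((1+qt_1)(1+qt_2)\bigr)^{-1}$ raises $q$-order by two, $f$ is a contraction for the $q$-adic metric; passing to the limit in $c_{m+1}=f(c_m)$ gives $Y=f(Y)$, that is
\begin{equation}
\label{QuadY}
qY^2+(1-q-q^2)\,Y-1=0.
\end{equation}
Setting $q=0$ in~\eqref{QuadY} forces $\phi_0=1$, and comparing coefficients of $q^k$ determines each $\phi_k$ uniquely from the lower ones, so $Y$ is the \emph{unique} power series with $Y(0)=1$ solving~\eqref{QuadY}.

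Next I would bring in $A(x)$, which satisfies (see~\cite{OEIS}) the relation $A=1+xA+x^2A(A-1)$, i.e. $x^2A^2-(1-x+x^2)A+1=0$, and is the unique power series with $A(0)=1$ obeying it. Substituting $x=-q$ shows that $B(q):=A(-q)$ satisfies $q^2B^2-(1+q+q^2)B+1=0$. I then set $\widetilde Y:=1+q-qB$, substitute $B=\tfrac{1+q-\widetilde Y}{q}$ into the last relation, and clear denominators; a short computation turns it into precisely~\eqref{QuadY} with $\widetilde Y$ in place of $Y$. Since $\widetilde Y(0)=1$, the uniqueness established above gives $Y=\widetilde Y=1+q-q\,A(-q)$. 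Expanding $1+q-q\sum_{n\geq0}a_n(-q)^n$ yields $\phi_0=1$, $\phi_1=1-a_0=0$, and $\phi_k=-(-1)^{k-1}a_{k-1}=(-1)^ka_{k-1}$ for $k\geq2$, which is the claim.

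The delicate point is the rigorous derivation of~\eqref{QuadY}: one must legitimately insert the \emph{limit} series $\left[\varphi\right]_q$ into the tail of the periodic continued fraction. This is exactly where Theorem~\ref{ConvThm}, combined with the $q$-adic contractivity of $f$, is indispensable, and the matrix presentation gives a fully algebraic alternative (relation~\eqref{QuadY} is nothing but the eigenvector condition for the iterated $2\times2$ block). Everything downstream of~\eqref{QuadY} is elementary algebra and sign bookkeeping, so I expect no further obstacle once the functional equation is secured.
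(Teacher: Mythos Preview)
Your proof is correct and follows essentially the same route as the paper: derive the quadratic functional equation~\eqref{GREq} from the $2$-periodicity of the continued fraction~\eqref{RRAlterEq}, then match it against the known quadratic for the ordinary generating function of~A004148. The paper obtains the fixed-point relation by directly writing $[\varphi]_q=1+\dfrac{q^2}{q+1/[\varphi]_q}$ and then defers to the literature for the generating-function side, whereas you supply the $q$-adic contractivity argument and carry out the substitution $\widetilde Y=1+q-qA(-q)$ explicitly; these are refinements rather than a different strategy.
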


\begin{proof}
Let us prove
that the series $\left[\varphi\right]_q$ satisfies the following functional equation:
\begin{equation}
\label{GREq}
q\left[\varphi\right]^2_q-
\left(q^2+q-1 \right)\left[\varphi\right]_q -1 =0,
\end{equation}
which is a $q$-analogue of $\varphi^2=\varphi+1$.
In fact,~\eqref{GREq} is immediate consequence of~\eqref{RRAlterEq} which can also be written\footnote{This observation is due to Doron Zeilberger.}
$$
\left[\varphi\right]_q=
1 + \cfrac{q^{2}}{q
          + \cfrac{1}{\left[\varphi\right]_q
          }} .
$$

Proposition~\ref{GoldConj} then follows from the known results about the generating function of the Generalized Catalan numbers.
Indeed, this generating function satisfies an equation equivalent to~\eqref{GREq}.
(see M. Somos' contribution to A004148).
\end{proof}

Solving~\eqref{GREq}, one obtains
$$
\left[\varphi\right]_q=
\frac{q^2+q-1+\sqrt{q^4+2q^3-q^2+2q+1}}{2q}.
$$
Obviously, at $q=1$ one recovers the golden ratio~$\varphi$.

\begin{rem}
Let us also mention that
the coefficients of $\left[\varphi\right]_q$ satisfy, for all~$n\geq3$, the following linear recurrence
$$
(k+1)\phi_{k} +(2k-1)\phi_{k-1} +(2-k)\phi_{k-2} +(2k-7)\phi_{k-3} +(k-5)\phi_{k-4}=0.
$$
In the context of the generalized Catalan numbers, this recurrence was conjectured by R.J. Mathar in 2011, and recently proved; see~\cite{EYZ} (based on~\cite{Zei}).
\end{rem}

\subsection{The $q$-deformed silver ratio}

The number $1+\sqrt{2}=\left[2,2,2,2,\ldots\right]$ is often called the silver ratio.
It is denoted by~$\delta_S$,
and its convergents are given by the quotient of consecutive Pell numbers.
This is probably the next simplest example of infinite continued fraction after the golden ratio.

Formula~\eqref{qa} implies the following $q$-deformation
\begin{equation}
\label{RRSEq}
\left[\delta_S\right]_q=
1 +q+ \cfrac{q^{4}}{q+q^2
          + \cfrac{1}{1 +q
          +\cfrac{q^{4}}{q+q^2
          + \cfrac{1}{\ddots
       }}}} 
\end{equation}

The stabilization process goes twice faster than for the golden ratio:
one needs the $n$th convergent to calculate~$\left[\delta_S\right]_q$ with accuracy up to~$q^{2n}$.
The series~$\left[\delta_S\right]_q$ starts as follows
$$
\begin{array}{rcl}
\left[\delta_S\right]_q&=&
1+q+q^4-2q^6+q^7+4q^8-5q^9-7q^{10}+ 18q^{11}+ 7q^{12}-55q^{13}+ 18q^{14}\\[4pt]
&&+ 146q^{15}- 155q^{16} - 322q^{17}+692q^{18}+ 476q^{19}- 2446q^{20}+ 307q^{21}\\[4pt]
&&+ 7322q^{22}- 6276q^{23}- 18277q^{24}+ 33061q^{25}+ 33376q^{26}- 129238q^{27}- 10899q^{28}\cdots
\end{array}
$$
We see that the coefficients grow much slower than those of~$\left[\varphi_n\right]_q$.
This sequence of coefficients is not in~OEIS.

\begin{prop}
\label{MyFla}
The series $\left[\delta_S\right]_q$ satisfies the following functional equation:
\begin{equation}
\label{SREq}
q\left[\delta_S\right]^2_q-
\left(q^3+2q-1 \right)\left[\delta_S\right]_q -1 =0.
\end{equation}
\end{prop}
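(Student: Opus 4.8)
The plan is to imitate the proof of Proposition~\ref{GoldConj}, exploiting the fact that the continued fraction~\eqref{RRSEq} is purely periodic of period two. Exactly as~\eqref{RRAlterEq} was folded into a single self-referential relation, the periodicity of~\eqref{RRSEq} shows that the tail starting at each occurrence of the block ``$1+q+\cdots$'' is again $\left[\delta_S\right]_q$. So the first step is to record the fixed-point identity
$$
\left[\delta_S\right]_q
= 1 + q + \cfrac{q^{4}}{q+q^{2} + \cfrac{1}{\left[\delta_S\right]_q}}.
$$
That $\left[\delta_S\right]_q$ is a well-defined power series into which one may legitimately substitute is guaranteed by Theorem~\ref{ConvThm}; alternatively the identity is visible from the matrix form~\eqref{DegEqOne}, where the $2n$-term convergent is the $n$-th power of one fixed $2\times2$ block, so the limit satisfies a fixed-point (eigenvector-type) relation.

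The second step is purely algebraic. Setting $X=\left[\delta_S\right]_q$ and collapsing the inner fraction to $\tfrac{q^{4}X}{(q+q^{2})X+1}$, the identity becomes $X=1+q+\tfrac{q^{4}X}{(q+q^{2})X+1}$. Multiplying through by $(q+q^{2})X+1$ gives
$$
(q+q^{2})\,X^{2} + \bigl(1-q-2q^{2}-q^{3}-q^{4}\bigr)X - (1+q) = 0.
$$
Now every coefficient is divisible by $1+q$: one has $q+q^{2}=q(1+q)$, the constant term is $-(1+q)$, and $1-q-2q^{2}-q^{3}-q^{4}=(1+q)(1-2q-q^{3})$. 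Cancelling this common factor yields exactly
$$
q\,X^{2}-\bigl(q^{3}+2q-1\bigr)X-1=0,
$$
which is~\eqref{SREq}.

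I expect the only point needing genuine care to be the justification of the fixed-point identity, namely that the infinite periodic continued fraction converges, in the coefficientwise sense of Theorem~\ref{ConvThm}, to a series that may be substituted into itself; once this is granted (as it already is for the golden ratio) the rest is mechanical. The one mildly non-obvious feature of the computation is the cancellation of the factor $1+q$, which is what reconciles the raw quadratic produced by the period-two block with the cleaner form~\eqref{SREq}; spotting it amounts to the short polynomial division recorded above. A useful sanity check at the end is to substitute $q=1$, where~\eqref{SREq} degenerates to $X^{2}-2X-1=0$ and recovers $\delta_S=1+\sqrt{2}$.
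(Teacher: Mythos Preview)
Your proof is correct and follows exactly the paper's approach: the paper simply rewrites~\eqref{RRSEq} as the fixed-point identity $\left[\delta_S\right]_q = 1 + q + \dfrac{q^{4}}{q+q^{2} + \dfrac{1}{\left[\delta_S\right]_q}}$ and says this ``readily implies''~\eqref{SREq}, whereas you additionally spell out the algebra, including the cancellation of the common factor $1+q$.
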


\begin{proof}
Fromula~\eqref{RRSEq} rewritten in the form
$$
\left[\delta_S\right]_q=
1 +q+ \cfrac{q^{4}}{q+q^2
          + \cfrac{1}{\left[\delta_S\right]_q
          }} 
$$
readily implies~\eqref{SREq}.
\end{proof}

Equation~\eqref{SREq} is a $q$-analogue of~$\delta_S^2=2\delta_S+1$, appearance of~$q^3$ in this formula is somewhat surprising.
Solving~\eqref{SREq}, one obtains
$$
\left[\delta_S\right]_q=
\frac{q^3+2q-1+\sqrt{q^6+4q^4-2q^3+4q^2+1}}{2q}.
$$

\subsection{The $q$-square roots of $2,3,5$ and $7$}
We calculate the $q$-analogues of the simplest square roots.
Recall that $\sqrt{2}=[1,\overline{2}],\, \sqrt{3}=[1,\overline{1,2}],\, \sqrt{5}=[2,\overline{4}],\, \sqrt{7}=[2,\overline{1,1,1,4}]$.
The series start as follows
$$
\begin{array}{rcl}
\left[\sqrt{2}\right]_q&=&
1+ q^3 - 2q^5  + q^6+ 4q^7- 5q^8- 7q^9+18q^{10} + 7q^{11}- 55q^{12}+ 18q^{13}\\[4pt]
&&+ 146q^{14} - 155q^{15}- 322q^{16}+ 692q^{17}+ 476q^{18}
 - 2446q^{19}+ 307q^{20}\\[4pt]
&&+ 7322q^{21}- 6276q^{22}- 18277q^{23}+ 33061q^{24}+ 33376q^{25}\cdots\\[8pt]
\left[\sqrt{3}\right]_q&=&
1+q^2-q^4+2q^5-2q^6-q^7+ 7q^8- 12q^9 + 7q^{10} + 18q^{11}- 59q^{12}+ 78q^{13}\\[4pt]
&&- q^{14}- 228q^{15}+ 514q^{16}- 469q^{17}- 506q^{18}+ 2591q^{19}- 4338q^{20}\\[4pt]
&&+ 1837q^{21}+ 9405q^{22} - 27430q^{23}+ 33390q^{24}+10329q^{25}\cdots\\[8pt]
\left[\sqrt{5}\right]_q&=&
1+q+q^6-q^8- q^9- q^{10}+ 3q^{11}+ 4q^{12}- q^{13} -6q^{14}- 11q^{15}+ 2q^{16}\\[4pt]
&&+ 25q^{17}+ 22q^{18}- 10q^{19}- 70q^{20}- 71q^{21}+ 67q^{22}+ 208q^{23}+ 168q^{24}- 222q^{25}\cdots\\[8pt]
\left[\sqrt{7}\right]_q&=&
1+q+q^3-q^4+2q^5-3q^6+4q^7-6q^8+8q^9-9q^{10}+9q^{11}-5q^{12}-9q^{13}\\[4pt]
&&+ 40q^{14}- 101q^{15}+ 215q^{16}- 411q^{17}+ 724q^{18}- 1195q^{19}+ 1845q^{20}\\[4pt]
&&- 2623q^{21}+ 3324q^{22}- 3412q^{23}+ 1696q^{24}+ 4157q^{25}\cdots
 \end{array}
$$
Note that the coefficients of~$\left[\sqrt{2}\right]_q$ are those of the silver ratio, but the power of~$q$ is shifted by~$1$;
in full accordance with~\eqref{TransEq}.

The following formulas can be proved in a similar way as~\eqref{GREq} and~\eqref{SREq},
The calculations are quite long but straightforward, so we omit the details.

\begin{prop}
\label{MySFla}
The series $\left[\sqrt{2}\right]_q,\left[\sqrt{3}\right]_q,\left[\sqrt{5}\right]_q$ and~$\left[\sqrt{7}\right]_q$ satisfy the following functional equations:
\begin{eqnarray}
\label{CREq}
q^2\left[\sqrt{2}\right]^2_q-
\left(q^3-1 \right)\left[\sqrt{2}\right]_q&=&q^2 + 1,\\
\label{CRTEq}
q^2\left[\sqrt{3}\right]^2_q-
\left(q^3+q^2-q-1 \right)\left[\sqrt{3}\right]_q&=&q^2+q + 1,\\
\label{RFiveEq}
q^3\left[\sqrt{5}\right]_q^2-(q^5+q^3-q^2-1)\left[\sqrt{5}\right]_q&=&q^4+q^3+q^2+q+1,\\
\label{RFSevenEq}
q^3\left[\sqrt{7}\right]_q^2-(q^5+q^4-q-1)\left[\sqrt{7}\right]_q&=&q^4+2q^3+q^2+2q+1.
\end{eqnarray}
\end{prop}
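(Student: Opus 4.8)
The plan is to exploit, for each of the four square roots, the eventual periodicity of its continued fraction and to fold the $q$-deformed continued fraction \eqref{qa} into a self-referential identity, exactly as the footnoted observation of Zeilberger does for $[\varphi]_q$ just after \eqref{RRAlterEq}. Since each of these continued fractions is eventually periodic, and the period can be made compatible with the rigid alternation of $q$ and $q^{-1}$ in \eqref{qa}, such a fold expresses $[\sqrt{d}]_q$ as a homographic function of itself; reading off the fixed-point relation then produces a quadratic equation with polynomial coefficients. I would organize this most cleanly through the matrix formula \eqref{DegEqOne}.

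Concretely, I would first reduce to a purely periodic number. Writing $c=\lfloor\sqrt{d}\rfloor$, the number $y=c+\sqrt{d}$ has a purely periodic expansion: $1+\sqrt2=[\overline2]$, $1+\sqrt3=[\overline{2,1}]$, $2+\sqrt5=[\overline4]$, $2+\sqrt7=[\overline{4,1,1,1}]$. Let $M_{\mathrm{per}}=\begin{pmatrix}A&B\\C&D\end{pmatrix}$ be the product over one period of the $2\times2$ matrices of \eqref{DegEqOne} (grouped into odd/even blocks, and with the period doubled to even length when necessary, e.g.\ to $(4,4)$ for $2+\sqrt5$ and to $(2,2)$ for $\delta_S=[\overline2]$, so that it aligns with the $q/q^{-1}$ alternation). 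By \eqref{qRegMat} the first column of $\widetilde M^{+}_q$ carries the numerator and denominator of the $q$-rational, so along the periodic tail the convergents are powers $M_{\mathrm{per}}^{k}$ and the first-column direction converges to the dominant eigendirection of $M_{\mathrm{per}}$; hence $[y]_q$ is the ratio of the components of the corresponding eigenvector. Eliminating the eigenvalue from $M_{\mathrm{per}}v=\lambda v$ yields
\begin{equation*}
C\,[y]_q^{\,2}-(A-D)\,[y]_q-B=0.
\end{equation*}
As a check, for $\varphi=[\overline1]$ one computes $M_{\mathrm{per}}=\left(\begin{smallmatrix}q+q^2&1\\q&1\end{smallmatrix}\right)$, which recovers \eqref{GREq}, and for $\delta_S=[\overline2]$ one recovers \eqref{SREq} after dividing out the common factor $1+q$.

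It then remains to translate the equation for $[y]_q$ into one for $[\sqrt{d}]_q$. Iterating \eqref{TransEq} gives $[x+c]_q=q^{c}[x]_q+[c]_q$, so $[y]_q=q^{c}[\sqrt{d}]_q+[c]_q$; substituting this into the quadratic above and clearing denominators produces the desired relations \eqref{CREq}--\eqref{RFSevenEq}. For $\sqrt2$ this is immediate without any new continued-fraction work, since $\sqrt2=\delta_S-1$ and \eqref{SREq} is already known: substituting $[\delta_S]_q=q[\sqrt2]_q+1$ into \eqref{SREq} and dividing by $q$ gives \eqref{CREq} directly.

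The only real difficulty I anticipate is bookkeeping rather than anything conceptual. Two points need care: first, aligning the continued-fraction period with the period-$2$ alternation of $q$ and $q^{-1}$ in \eqref{qa}, which forces a doubling for the odd-length periods and requires tracking where each period begins relative to the odd/even blocks; and second, the matrix products $M_{\mathrm{per}}$ together with the substitution $[y]_q=q^{c}[\sqrt{d}]_q+[c]_q$ yield quadratics carrying spurious polynomial factors (like the $1+q$ already seen for $\delta_S$), which must be spotted and cancelled to reach the clean stated forms. These are precisely the ``long but straightforward'' computations; I would settle the four identities by expanding $M_{\mathrm{per}}$ explicitly in each case and simplifying.
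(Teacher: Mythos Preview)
Your proposal is correct and follows essentially the same approach the paper indicates: folding the periodic tail of the $q$-continued fraction~\eqref{qa} into a self-referential identity, exactly as done for~$\varphi$ and~$\delta_S$, and then simplifying to a quadratic. Your organization via the matrix~$M_{\mathrm{per}}$ from~\eqref{DegEqOne} together with the shift to the purely periodic $c+\sqrt{d}$ and the translation~\eqref{TransEq} is a clean, systematic repackaging of that same computation; the paper simply says the calculations are ``long but straightforward'' and omits them, whereas you spell out a workable recipe (including the need to double odd periods and to cancel spurious polynomial factors).
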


We obtain, consequently, the following expressions for the quantized square roots.
$$
\begin{array}{rcl}
\left[\sqrt{2}\right]_q&=&
\displaystyle
\frac{q^3-1+\sqrt{q^6+4q^4-2q^3+4q^2+1}}{2q^2},
\\[8pt]
\left[\sqrt{3}\right]_q&=&
\displaystyle
\frac{q^3+q^2-q-1+\sqrt{q^6 + 2q^5 + 3q^4 + 3q^2 + 2q + 1}}{2q^2},
\\[8pt]
\left[\sqrt{5}\right]_q&=&
\displaystyle
\frac{q^5+q^3-q^2-1+\sqrt{q^{10}+2q^8+2q^7 + 5q^6 + 5q^4 + 2q^3 + 2q^2+ 1}}{2q^3},
\\[8pt]
\left[\sqrt{7}\right]_q&=&
\displaystyle
\frac{q^5+q^4-q-1+\sqrt{q^{10}+2q^9+q^8+4q^7 + 6q^6 + 6q^4 + 4q^3 + q^2+2q +1}}{2q^3}.
\end{array}
$$
Note that $\left[\sqrt{5}\right]_q$ looks quite different from the golden ratio.
This is an example of a highly non-trivial action of homothety $x\to{}x/2$.

We wonder if some relations similar to~\eqref{GREq},~\eqref{SREq},~\eqref{CREq},~\eqref{CRTEq},~\eqref{RFiveEq} and~\eqref{RFSevenEq}
hold for $q$-deformations of arbitrary quadratic irrationals.

\section{$q$-deformations of $e$ and $\pi$}\label{ExSecBis}

In this section we write down the first terms of the $q$-deformations of two notable examples 
of transcendental irrational numbers,~$e$ and~$\pi$.
We calculated several hundreds of terms to convince ourselves that
the coefficients of the corresponding series do not correspond to any sequence of OEIS.
However, one can make some surprising observations.

\subsection{Computing $\left[e\right]_q$}

The  continued fraction expansion of the Euler constant is given by the following famous regular pattern
$e=\left[2, 1, 2, 1, 1, 4, 1, 1, 6, 1, 1, 8, 1, 1, 10,\ldots\right]$ (see sequence A003417 in the OEIS).
To calculate the first $40$ terms in the series $\left[e\right]_q$, one needs to take the $15$th convergent
$$
e_{15}=\left[2, 1, 2, 1, 1, 4, 1, 1, 6, 1, 1, 8, 1, 1, 10\right]=517656/190435.
$$
The series $\left[e\right]_q$ starts as follows:
$$
\begin{array}{rcl}
\left[e\right]_q&=&
1+q+q^3-q^5+2q^6-3q^7+3q^8-q^9\\[4pt]
&&-3q^{10}+9q^{11}-17q^{12}+25q^{13}-29q^{14}+23q^{15}+2q^{16}\\[4pt]
&&-54q^{17}+ 134q^{18}- 232q^{19}+ 320q^{20} - 347q^{21}+ 243q^{22}+ 71q^{23}\\[4pt]
&&- 660q^{24}+1531q^{25}- 2575q^{26}+ 3504q^{27}- 3804q^{28}+ 2747q^{29}+ 488q^{30}\\[4pt]
&&- 6537q^{31}+ 15395q^{32}- 25819q^{33}+ 34716q^{34}- 36780q^{35}+ 24771q^{36}+ 9096q^{37}\\[4pt]
&& - 70197q^{38}+ 156811q^{39}\cdots
\end{array}
$$
We observe that the coefficients of~$q^{2+7k}$,
where $k\geq0$, turn out to be smaller than those of their neighbors.
The signs of the coefficients also obey a certain $7$-periodic pattern: indeed, the double plus,
i.e., ``$+,+$'' signs, appears with period~$7$.
We do not know any reason for such a ``$7$-periodicity'' related to Euler's number.

\subsection{The quantum $\pi$}

The continued fraction expansion of~$\pi$ (cf. sequence A001203 in the OEIS) starts as follows: 
$\pi=\left[3, 7, 15, 1, 292, 1, 1, 1, 2, 1, 3, 1, 14, 2, 1, 1, 2, 2, 2, 2, 1, 84,\ldots\right]$.
The rules governing this sequence are unknown.

The $4$th convergent $\pi_5=\left[3, 7, 15, 1\right]=355/113$ gives $24$ first terms of~$\left[\pi\right]_q$,
and already the $5$th convergent $\pi_5=\left[3, 7, 15, 1, 292\right]=103993/33102$
allows us to calculate~$\left[\pi\right]_q$ up to degree~${317}$.
We calculated up to
$\left[\pi_{38}\right]_q=11895062545096656711950/3786316004878788190109$
that approximates~$\pi$ up to~$43$ digits, and
that gives the first $603$ terms of the series $\left[\pi\right]_q$.
The first $79$ terms of $\left[\pi\right]_q$ are:
$$
\begin{array}{rcl}
\left[\pi\right]_q&=&
1+q+q^2+q^{10}-q^{12}-q^{13}+q^{15}+q^{16} \\[4pt]
&&- q^{20}-2q^{21}- q^{22}+2q^{23}+4q^{24}+q^{25}\\[4pt]
&&-4q^{27}-4q^{28}-2q^{29}+q^{30}+5q^{31}+8q^{32}+3q^{33}\\[4pt]
&&-3q^{34}-10q^{35}-12q^{36}-5q^{37}+8q^{38}+19q^{39}+20q^{40}+2q^{41}\\[4pt]
&&-18q^{42}-32q^{43}-25q^{44}+31q^{46}+51q^{47}+45q^{48}\\[4pt]
&&-7q^{49}-65q^{50}- 94q^{51}- 57q^{52}+ 35q^{53}+122q^{54}+ 140q^{55} + 72q^{56}\\[4pt]
&&- 76q^{57}- 209q^{58}- 234q^{59}- 90q^{60}
+ 171q^{61}+ 383q^{62}+ 363q^{63}+ 76q^{64}\\[4pt]
&&- 364q^{65}- 650q^{66} - 545q^{67}- 6q^{68}+ 702q^{69}+ 1101q^{70}+ 790q^{71}
\\[4pt]
&&- 180q^{72}- 1329q^{73}- 1824q^{74} - 1113q^{75}+ 642q^{76}+ 2454q^{77}+ 2982q^{78}
+ 1415q^{79} \cdots
\end{array}
$$
The coefficients of this series grow very slowly in contrast with the other examples we considered so far.
Asymptotic of the ratio of two consecutive coefficients seems to be close to~$1$,
that would imply that the radius of convergence of the series is equal to~$1$.
A curious observation is that, for unknown reasons, the coefficient of~$q^{45}$ vanishes.
One also observes oscillation of the sequence of coefficients, and the unimodality property
of every (short) subsequence of coefficients with constant sign.
We were unable to find any pattern or to conjecture any functional equation for this series.

\section{Translations}\label{TransSec}

In this section we consider the properties of $q$-deformed reals
under translations of the argument.
The action of the translation group~$\Z$ is defined by the operator
$T:x\to{}x+1$ and its inverse, $T^{-1}:x\to{}x-1$.
We first study~$T$ which is simpler, and then~$T^{-1}$ brings us to a new ground.
In particular, we extend the notion of $q$-deformation to negative real numbers.

\subsection{Right translations}
Consider first the action of~$T$.

\begin{prop}
\label{TroP}
If $x\leq1$, then $\left[x+1\right]_q=q\left[x\right]_q+1$.
\end{prop}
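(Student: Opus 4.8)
The plan is to establish the translation formula $\left[x+1\right]_q=q\left[x\right]_q+1$ first for rational $x\leq1$ by a direct computation on continued fractions, and then to pass to the general real case by the limiting/stabilization machinery already developed in Section~\ref{ProoSec}. For the rational case, I would start with a rational $\frac{r}{s}$ with $0<\frac{r}{s}\leq1$, so that $\frac{r}{s}+1=\frac{r+s}{s}>1$ admits a continued fraction expansion $[a_1,\ldots,a_{2m}]$ of even length as in~\eqref{CFEq}; the key arithmetic observation is that adding $1$ to $\frac{r}{s}$ simply increments the leading partial quotient, turning the expansion of $\frac{r}{s}$ (suitably normalized) into $[a_1+1,a_2,\ldots]$. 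The cleanest route is through the matrix formula~\eqref{DegEqOne}: prepending the factor corresponding to incrementing $a_1$ should multiply $\widetilde{M}^{+}_{q}$ on the left by a matrix that realizes precisely the M\"obius-type transformation $\frac{\Rc}{\Sc}\mapsto\frac{q\Rc+\Sc}{\Sc}$ on the first column, which is exactly $q\left[\frac{r}{s}\right]_q+1$ after reading off the entries via~\eqref{qRegMat}.

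Concretely, I would note that the matrix $\begin{pmatrix}[a_1+1]_q & q^{a_1+1}\\ 1 & 0\end{pmatrix}$ factors relative to $\begin{pmatrix}[a_1]_q & q^{a_1}\\ 1 & 0\end{pmatrix}$ in a way that amounts to left multiplication by $\begin{pmatrix} q & 1\\ 0 & 1\end{pmatrix}$, using the identity $[a_1+1]_q=1+q\,[a_1]_q$ (equivalently $[a+1]_q-q[a]_q=1$) together with $q^{a_1+1}=q\cdot q^{a_1}$. Left-multiplying~\eqref{qRegMat} by $\begin{pmatrix} q & 1\\ 0 & 1\end{pmatrix}$ sends the first column $\begin{pmatrix} q\Rc\\ q\Sc\end{pmatrix}$ to $\begin{pmatrix} q^2\Rc+q\Sc\\ q\Sc\end{pmatrix}$, whose ratio is $\frac{q\Rc+\Sc}{\Sc}=q\left[\frac{r}{s}\right]_q+1$; reading this as the first column of the matrix for $\frac{r}{s}+1$ then yields the claim for all rational $\frac{r}{s}\in(0,1]$, with the boundary and small cases ($x=1$, giving $[2]_q=q[1]_q+1$) checked directly.

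To reach arbitrary real $x\leq1$, I would invoke the definition~\eqref{TSEqBis}: choose a sequence of rationals $(x_n)$ converging to $x$, apply the rational identity coefficientwise to get $\left[x_n+1\right]_q=q\left[x_n\right]_q+1$ as an identity of Taylor series, and then take the limit $n\to\infty$. Since $x_n+1\to x+1$ and Theorem~\ref{ConvThm} guarantees that the Taylor coefficients of both $\left[x_n\right]_q$ and $\left[x_n+1\right]_q$ stabilize to those of $\left[x\right]_q$ and $\left[x+1\right]_q$ respectively, the linear relation survives the limit at each fixed power of $q$. The main obstacle I anticipate is bookkeeping at the level of the continued fraction when $\frac{r}{s}$ is itself an integer or when its expansion has odd length before normalization, so that the ``$+1$'' does not act on a clean leading term; handling the parity/length conventions of~\eqref{CFEq} and verifying that incrementing $a_1$ is compatible with the even-length normalization is where I expect the genuine care to be needed, rather than in the limiting argument, which is routine given Theorem~\ref{ConvThm}.
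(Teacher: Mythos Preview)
Your proposal is correct and follows essentially the same two-step skeleton as the paper: prove the identity for rationals via the explicit formula, then pass to irrationals by stabilization (Theorem~\ref{ConvThm}). The only difference is cosmetic: the paper appeals directly to the continued fraction formula~\eqref{qa}, observing that incrementing $a_1$ replaces $[a_1]_q+\cfrac{q^{a_1}}{\,\cdots\,}$ by $[a_1{+}1]_q+\cfrac{q^{a_1+1}}{\,\cdots\,}=1+q\Bigl([a_1]_q+\cfrac{q^{a_1}}{\,\cdots\,}\Bigr)$, whereas you recast the same computation in the matrix language of~\eqref{DegEqOne}--\eqref{qRegMat} via left multiplication by $\begin{pmatrix} q & 1\\ 0 & 1\end{pmatrix}$. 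Your factorization check is correct, and the ``obstacle'' you anticipate about even-length normalization is a non-issue: incrementing $a_1$ does not change the length of the expansion, so the convention~\eqref{CFEq} is preserved automatically.
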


\begin{proof}
The statement of the proposition is obvious when $x$ is a $q$-integer.
Hence by the explicit formula~\eqref{qa} the statement is also clear for a rational~$x$.
 The irrational case will then follow from the stabilization phenomenon.
\end{proof}

\subsection{The coefficient ``gap''}

\begin{prop}
\label{GaP}
The first order term of series $\left[x\right]_q$ vanishes:
$$
\left[x\right]_q=1+\varkappa_2q^2+\varkappa_3q^3+\cdots
$$
if and only if $1\leq{}x\leq2$.
\end{prop}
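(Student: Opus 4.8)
The plan is to separate the two assertions bundled into the displayed normal form $[x]_q = 1 + \varkappa_2 q^2 + \cdots$: that the constant term equals $1$, and that the coefficient of $q^1$ vanishes. For every real $x \geq 1$ the $q$-rationals $[x_n]_q = \Rc_n/\Sc_n$ have numerator and denominator with constant term $1$, so by Theorem~\ref{ConvThm} the limit $[x]_q$ again has constant term $1$; hence for $x \ge 1$ the displayed form holds exactly when the coefficient of $q^1$ is zero. For $x < 1$ the form already fails at the leading term: from $[x]_q = ([x+1]_q - 1)/q$ in \eqref{TransEq} one sees that $0 \le x < 1$ produces a series with \emph{zero} constant term, while $x < 0$ produces a genuine Laurent series of the shape \eqref{GeneralEq} with leading term $-q^{-k}$. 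So the whole equivalence reduces to showing, for $x \ge 1$, that the coefficient of $q^1$ in $[x]_q$ vanishes precisely when $1 \le x \le 2$.

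The heart of the matter is the irrational case, which I would attack directly through the continued-fraction formula \eqref{qa}. Writing $x = [a_1, a_2, \ldots]$ and letting $T_j$ denote the $j$-th tail of \eqref{qa}, so that $[x]_q = [a_1]_q + q^{a_1}/T_2$, the key step is a valuation lemma proved by downward induction on the even-length convergent $[a_1, \ldots, a_{2k}]$: the odd tails $T_3, T_5, \ldots$ are power series with $\val = 0$ and constant term $1$, while the even tails $T_{2j}$ (with a nontrivial tail following them) satisfy $\val(T_{2j}) = -a_{2j}$ with leading coefficient $1$, no cancellation occurring against the $[a_{2j}]_{q^{-1}}$ summand because $\val([a_{2j}]_{q^{-1}}) = -(a_{2j}-1) > -a_{2j}$. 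Applying this to convergents $[a_1, \ldots, a_{2k}]$ with $k \ge 2$ (so that $T_2$ always has a nontrivial tail and $\val(T_2) = -a_2$), one gets $\val(q^{a_1}/T_2) = a_1 + a_2$. When $a_1 = 1$ this valuation is $1 + a_2 \ge 2$, so $[x]_q = [1]_q + O(q^2) = 1 + O(q^2)$ and the $q^1$-coefficient is $0$; when $a_1 \ge 2$ the polynomial $[a_1]_q = 1 + q + \cdots$ already contributes $1$ in degree $1$ while the correction, of valuation $\ge 2$, cannot touch it. Since $a_1 = 1$ is equivalent to $1 < x < 2$ and $a_1 \ge 2$ to $x \ge 2$, Theorem~\ref{ConvThm} then transports the conclusion from the convergents to $[x]_q$ itself, the $q^1$-coefficient being stable.

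Finally I would extend to all real $x$ and reconcile the endpoints. The nonvanishing half of the converse follows cleanly from the translation law: if $x - 1 \ge 1$ then $[x-1]_q$ has constant term $1$, so Proposition~\ref{TroP} gives $[x]_q = q[x-1]_q + 1 = 1 + q + \cdots$, exhibiting a nonzero $q^1$-coefficient and pushing the nonvanishing regime to the whole half-line $x \ge 2$. For rational $x \in (1,2)$ the valuation computation applies verbatim to the even-length expansion (which has $a_1 = 1$), and the remaining real values are handled by stabilization. The one delicate point, and the main obstacle, is the right endpoint $x = 2$: as a $q$-rational $[2]_q = 1 + q$ has nonzero $q^1$-coefficient, so the closed interval in the statement must be understood in the real-number sense, where $2$ is approached from the left and the stabilized series is $1 + q^2 + \cdots$ (compare the Remark following Theorem~\ref{ConvThm}). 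Keeping this limiting convention consistent, and making sure the tail-valuation lemma genuinely rules out cancellation in the Laurent expansions of the $T_{2j}$, is where the argument will demand the most care.
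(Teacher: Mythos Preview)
Your argument is correct, but it takes a substantially more elaborate route than the paper's. The paper reduces immediately to the rational case and then observes a single clean fact: writing $\left[\frac{r}{s}\right]_q = \frac{\Rc}{\Sc}$ with $\Rc = 1 + r_1 q + \cdots$ and $\Sc = 1 + s_1 q + \cdots$, one has $r_1 = s_1$ if and only if $a_1 = 1$; since the Taylor expansion is $(1 + r_1 q + \cdots)(1 - s_1 q + \cdots) = 1 + (r_1 - s_1)q + \cdots$, the $q^1$-coefficient is $r_1 - s_1$, and we are done. No tail-valuation induction, no separate treatment of the converse via translations.

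What your approach buys is genuine care about issues the paper glosses over: you explicitly pin down why the constant term is $1$, you handle $x < 1$ by feeding it through \eqref{TransEq} and \eqref{GeneralEq}, and you flag the ambiguity at the endpoint $x = 2$ (where $[2]_q = 1+q$ as a $q$-rational but the left-limit series is $1 + q^2 + \cdots$). The paper's proof, by contrast, silently identifies ``$a_1 = 1$'' with ``$1 \le x \le 2$'' and moves on. Your valuation lemma on the tails $T_j$ is correct and is essentially a repackaging of the same information the paper extracts from the coefficients of $\Rc$ and $\Sc$; it just takes more steps to get there. If you wanted to streamline, you could replace the whole tail-valuation discussion by the paper's one-line observation about $r_1$ versus $s_1$, while keeping your more honest treatment of the boundary cases.
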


\begin{proof}
As in the previous proof, it suffices to prove the statement for a rational~$x$.
Let $\frac{r}{s}=\left[a_1,a_2,\ldots,a_{2m}\right]$ be a rational written in a form of a continued fraction.
It is an easy computation that the polynomials $\Rc=1+r_1q+\cdots$ and $\Sc=1+s_1q+\cdots$
of the $q$-deformed rational $\left[\frac{r}{s}\right]_q=\frac{\Rc}{\Sc}$ have identical first-order coefficient:
$r_1=s_1$ if and only if $a_1=1$.

The result then follows from the formula
$$
\frac{\Rc}{\Sc}=\frac{1+r_1q+\cdots}{1+s_1q+\cdots}=(1+r_1q+\cdots)(1-s_1q+\cdots).
$$
\end{proof}

Propositions~\ref{TroP} and~\ref{GaP} together imply Theorem~\ref{GapThm}.

\subsection{Left translations and $q$-deformed negative numbers}
The heuristic definition 
$$
\left[x-1\right]_q:=\frac{\left[x\right]_q-1}{q},
$$
that we adopt now,
leads to a classes of $q$-deformed reals that we did not consider so far.
For~$x\geq2$, the action of~$T^{-1}$ is a tautological inverse of Proposition~\ref{TroP}.
The situation changes for $1\leq{}x\leq2$ because of the ``first order gap'' of Proposition~\ref{GaP}.

More precisely, we obtain Laurent series of the following type.

\begin{enumerate}
\item
If $0\leq{}x<1$, then the zero-order term of~$\left[x\right]_q$ vanishes:
$$
\left[x\right]_q=\varkappa_1q+\varkappa_2q^2+\varkappa_3q^3+\cdots
$$

\item
If $-1\leq{}x<0$, then 
$$
\left[x\right]_q=-\frac{1}{q}+\varkappa_0+\varkappa_1q+\varkappa_2q^2+\varkappa_3q^3+\cdots
$$

\item
More generally, applying the operator~$T^{-1}$ several times, we obtain the general
form \eqref{GeneralEq} of~$\left[x\right]_q$ in the case $-k\leq{}x<1-k$.
\end{enumerate}

\begin{ex}
(a) If $n$ is a positive integer, then
$\left[-n\right]_q=q^{-n}+q^{1-n}+\cdots+q^{-1}$.

(b) For $x=\frac{1}{2}$ we have $\left[\frac{1}{2}\right]_q=\frac{q}{1+q}$, and for $x=-\frac{1}{2}$ we have $\left[-\frac{1}{2}\right]_q=-\frac{1}{q(1+q)}$,
so that 
$$
\left[\frac{1}{2}\right]_q+\left[-\frac{1}{2}\right]_q=-\frac{1}{q}+1.
$$
\end{ex}

We calculated the series corresponding to the negations of several examples of quadratic irrationals, 
and observed the following property.
Some of them satisfy the property that the sum of the series $\left[x\right]_q+\left[-x\right]_q$ contains only finitely many terms.

\begin{ex}
\label{UltimEx}
(a) For $-\sqrt{2}$, we have
$$
\left[-\sqrt{2}\right]_q=-\frac{1}{q^2}-1+q-q^{3}+2q^{5}
-q^6- 4q^7+5q^8+7q^9-18q^{10} - 7q^{11}+ 55q^{12}- 18q^{13}\cdots
$$
Starting from the third-order term, this series is the negation of~$\left[\sqrt{2}\right]_q$.

(b)  For $-\sqrt{7}$, we have
$$
\left[-\sqrt{7}\right]_q=-\frac{1}{q^3}-\frac{1}{q^2}-1+q^2-q^3+q^4-2q^5
+3q^6-4q^7+6q^8-8q^9+9q^{10}-9q^{11}+5q^{12}+9q^{13}\cdots
$$
Once again, starting from the third-order term, the series is the negation of~$\left[\sqrt{7}\right]_q$.
\end{ex}

This property is a mystery to us, since, as we can see already for $x=\frac{7}{5}$, it fails to be true even for rationals.

Note also, that nothing similar to Example~\ref{UltimEx} happens for more sophisticated irrationals.

\begin{ex}
\label{UltimExBis}
For $-\pi$, we have the series that starts as follows
$$
\begin{array}{rcl}
\left[-\pi\right]_q&=&-\frac{1}{q^4}-\frac{1}{q^2}-\frac{1}{q}-q^{3}+q^{4}
-q^{10}+q^{11}-q^{17}+2q^{18}-3q^{19}+3q^{20}-q^{21}-q^{24}+3q^{25}-6q^{26}\\[4pt]
&&+6q^{27}-2q^{28}-q^{31}+4q^{32}-9q^{33}+10q^{34}-6q^{35}+3q^{36}-q^{37}-q^{38}+5q^{39}-13q^{40}\cdots
\end{array}
$$
and seems to have nothing in common with~$\left[\pi\right]_q$.
\end{ex}

\bigbreak \noindent
{\bf Acknowledgements}.
We are grateful to Vladlen Timorin for
a suggestion to look at Taylor coefficients.
We are also grateful to Fr\'ed\'eric Chapoton for a SAGE program computing $q$-rationals and to
Doron Zeilberger for the proof of the functional equations formulated as conjectures in the first version of this paper.
This paper was partially supported by the ANR project SC3A, ANR-15-CE40-0004-01.

\end{document}